\documentclass{article}
\usepackage{graphicx}
\usepackage{amsmath}
\usepackage{amsthm}
\usepackage{float}
\usepackage{graphicx}
\usepackage{array}
\usepackage{caption}
\usepackage{tikz-cd}
\usepackage{changepage}
\usepackage{ytableau}
\usepackage{amsthm}
\usepackage{thmtools}
\usepackage[round,authoryear]{natbib}
\usepackage{hyperref}
\usepackage{amssymb}
\usepackage{listings}
\setcitestyle{authoryear,round}
\title{The Hesse Pencil Variety}
\author{Elisabetta Rocchi\\
\\
Sorbonne Université, CNRS, LIP6, F-75005 Paris, France\\
\texttt{elisabetta.rocchi@sorbonne-universite.fr}}
\date{}
\usepackage[a4paper,margin=3.5cm]{geometry}

\newtheorem{teo}{Theorem}[section]
\newtheorem{lem}[teo]{Lemma}

\newtheorem{prop}[teo]{Proposition}
\newtheorem{defn}[teo]{Definition}

\newtheorem{oss}[teo]{Remark}

%macros for report
\newcommand{\revised}[1]{{\color{black}#1}}

\begin{document}

\maketitle
\tableofcontents
\begin{abstract}
    We introduce and study the Hesse pencil variety $H_8$, 
    obtained as the Zariski closure in the Grassmannian $G(1,9)$ of 
    the set of pencils generated by a smooth plane cubic and its Hessian. 
    We prove that $H_8$ has dimension $8$ and can be realized as the intersection 
    of $G(1,9)$ with ten hyperplanes corresponding to the Schur module $\mathbb{S}_{(5,1)}\mathbb{C}^3$. 
    Moreover, $H_8$ coincides with the closure of the special linear group $SL(3)$-orbit 
    of the pencil $\langle x^3+y^3+z^3,\ xyz\rangle$ and contains eight additional orbits. 
    The variety is singular, and its singular locus is precisely the union of two orbits, 
    $O(\langle x^3,x^2y\rangle)$ and $O(\langle x^2y,x^2z\rangle)$.

A key ingredient in our study is a cubic skew-invariant 
$R \in \bigwedge^3(\mathrm{Sym}^3\mathbb{C}^3)$, defined by 
$R(l^3,m^3,n^3) = (l \wedge m \wedge n)^3$, where $l,m,n$ are linear forms in $(\mathbb{C}^3)^\ast$. 
The vanishing of $R$ characterizes pencils generated by a cubic and its Hessian, 
and it allows us to write explicit equations defining $H_8$. 
A crucial geometric step in our argument is the fact that through four general points of 
$\mathbb{P}^2$ there pass exactly six Hesse configurations, which enables us to compute 
the multidegree of $H_8$ and conclude that it coincides with the variety defined by the invariant $R$.

\end{abstract}

\bigskip
\noindent\textbf{Keywords:} Algebraic geometry, Plane cubic curves, 
Grassmannians, Hesse configuration, Inflection points.

\section{Introduction}
\revised{We denote by $\mathbb{P}^m$ the projective space of dimension $m$ over the complex numbers.}
Let $d\geq 3 $ and $H_{(d,n)}: \mathbb
{P}^{\binom{n+d}{d}-1}\dashrightarrow \mathbb{P}^{\binom{n+\overline{d}}{\overline{d}}-1}$ 
be the Hessian map, which sends a degree $d$ forms to a form of degree $\overline{d}=(d-2)(n+1)$. 
Unless $(d,n)=(3,1)$ we have always $d\leq \overline{d}$ and equality holds only for binary quartics 
$(d,n)=(4,1)$ and plane cubics $(d,n)=(3,2)$. In these cases, we obtain the following maps, 
\begin{equation*}
    H_{(4,1)}:\mathbb{P}^4 \dashrightarrow \mathbb{P}^4 \ \ \ \ \ \ \ \ \ \ \ \ \ \ \ \ \ H_{(3,2)}: 
    \mathbb{P}^9 \dashrightarrow \mathbb{P}^9
\end{equation*}
Given $f\in \mathbb{P}(\mathrm{Sym}^4 \mathbb{C}^2)$ or $\mathbb{P}(\mathrm{Sym}^3\mathbb{C}^3)$, 
let $H(f)$ be its image under the Hessian map. If $f$ is smooth, we can consider the pencil 
\begin{equation*}
    \mu f+\lambda H(f), \ \ \ \ \mu,\lambda \in \mathbb{C}
\end{equation*}
which we denote by $\langle f, H(f)\rangle$, and refer to as a Hesse Pencil. \\

In the case of binary quartics, the Hesse pencils correspond to points of $G(1,4)$, 
the Grassmannian of lines in $\mathbb{P}^4$.
\begin{restatable}{defn}{defX}\label{defX}[The Hesse Pencil Variety of binary quartics]
    Consider the set of Hesse pencils.
    We denote by $H_3$ the subvariety of $G(1,4)$ obtained as the Zariski closure of this space, 
    that is 
    \begin{equation*}
        H_3=\overline{\{\langle f,H(f)\rangle \in G(1,4)\  | f\ \ has\ \ 4\ \ simple\ \ roots \}}
    \end{equation*}
\end{restatable}

Decomposing the space $\bigwedge^2(\mathrm{Sym}^4\mathbb{C}^2)$ into Schur modules under 
the action of $SL(2)$, we find:
\begin{equation*}
    \bigwedge\nolimits^2(\mathrm{Sym}^4\mathbb{C}^2)\cong \mathrm{Sym}^6\mathbb{C}^2\oplus \mathrm{Sym}^2\mathbb{C}^2.
\end{equation*}
The variety $H_3$ is the intersection $\mathbb{P}\left(\widehat{G(1,4)}\cap \mathrm{Sym}^6\mathbb{C}^2\right)$, 
where $\widehat{G(1,4)}$ is the affine cone over $G(1,4)$. It is a smooth Fano threefold, 
where the subscript 3 indicates the dimension. Moreover, $H_3$ coincides with the closure 
of the orbit of the pencil $\langle x^4+y^4,x^2y^2\rangle$, and contains, besides this orbit, 
two additional orbits \revised{(see Table~\ref{3orbitquartics})}. \\

For plane cubics, the Hesse pencils correspond to points of $G(1,9)$, the Grassmannian 
of lines in $\mathbb{P}^9$.
\begin{restatable}{defn}{defXX}\label{defXX}[The Hesse Pencil Variety of plane cubics]
    Consider the set of Hesse pencils.
    We denote by $H_8$ the subvariety of $G(1,9)$ obtained as the Zariski closure of 
    this space, that is 
    \begin{equation*}
        H_8=\overline{\{\langle f,H(f)\rangle \in G(1,9)\  | f\ \ is\ \ smooth\}}
    \end{equation*}
\end{restatable}

The main result we shall prove concerns this variety and can be stated as follows.
\begin{restatable}{teo}{mainthm}\label{teoS=N}
The Hesse pencil variety $H_8$ is the intersection of $G(1,9)$ with $10$ hyperplanes. 
It coincides with the closure of the $SL(3)$-orbit of the pencil 
\[
\langle x^3+y^3+z^3,\; xyz\rangle.
\]
In particular, it is irreducible and contains $8$ additional orbits of pencils. 
Moreover, $H_8$ is singular, and its singular locus is precisely the union of the two orbits 
\[
O(\langle x^3,x^2y\rangle) \quad \text{and} \quad O(\langle x^2y,x^2z\rangle).
\]
\end{restatable}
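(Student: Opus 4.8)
The plan is to work $SL(3)$-equivariantly throughout: the Hessian being a covariant, the assignment $f\mapsto\langle f,H(f)\rangle$ is $SL(3)$-equivariant, so $H_8$ is an $SL(3)$-stable subvariety of $G(1,9)=G(2,\mathrm{Sym}^3\mathbb{C}^3)$, Plücker-embedded in $\mathbb{P}(\bigwedge^2\mathrm{Sym}^3\mathbb{C}^3)=\mathbb{P}^{44}$. I would first settle the orbit description. By the classical fact that every smooth plane cubic can be brought into Hesse normal form $x^3+y^3+z^3+\lambda xyz$, and the fact that the Hesse pencil is preserved by the Hessian (the Hessian of any smooth member lies again in $\langle x^3+y^3+z^3,xyz\rangle$), for every smooth $f$ the pencil $\langle f,H(f)\rangle$ is $SL(3)$-conjugate to $P_0:=\langle x^3+y^3+z^3,xyz\rangle$. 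Thus the set generating $H_8$ in Definition~\ref{defXX} is exactly the orbit $O(P_0)$, so $H_8=\overline{O(P_0)}$; since the stabiliser of $P_0$ is finite (the Hessian group, of order $216$), $\dim H_8=\dim SL(3)=8$.

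Next I would pin down the linear span. The span of $H_8$ is the projectivization of an $SL(3)$-subrepresentation of $\bigwedge^2\mathrm{Sym}^3\mathbb{C}^3\cong\mathbb{S}_{(5,1)}\mathbb{C}^3\oplus\mathbb{S}_{(3,3)}\mathbb{C}^3$ (of dimensions $35$ and $10$). Computing the Plücker bivector $\omega_{P_0}$ and its components in the two summands — the $\mathbb{S}_{(3,3)}$-component being, up to a determinant twist, the contraction of the cubic skew-invariant $R\in\bigwedge^3(\mathrm{Sym}^3\mathbb{C}^3)$, with $R(l^3,m^3,n^3)=(l\wedge m\wedge n)^3$, against $\omega_{P_0}$ — one verifies that the $\mathbb{S}_{(3,3)}$-part vanishes while the $\mathbb{S}_{(5,1)}$-part does not. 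By equivariance this persists along the orbit, hence on $H_8$, so $H_8\subseteq Y:=G(1,9)\cap\mathbb{P}(\mathbb{S}_{(5,1)}\mathbb{C}^3)$, the intersection of $G(1,9)$ with the $10$ hyperplanes cutting out $\mathbb{P}(\mathbb{S}_{(5,1)}\mathbb{C}^3)$; equivalently, the vanishing of $R$ cuts out $Y$. This gives one inclusion of the theorem.

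The crux — and what I expect to be the main obstacle — is the reverse inclusion $Y\subseteq H_8$, i.e.\ that $Y$ is irreducible of dimension $8$. This is genuinely an excess-intersection situation: $G(1,9)$ has codimension $28$ in $\mathbb{P}^{44}$ and $\mathbb{P}(\mathbb{S}_{(5,1)}\mathbb{C}^3)$ codimension $10$, so the expected dimension of $Y$ is only $6$, and $[Y]$ cannot be read off by naive multiplication. Instead I would compute the multidegree $[H_8]=\sum_{|\lambda|=8}c_\lambda\sigma_\lambda$ of the orbit closure directly in $H^{16}(G(2,10))$: the Schubert condition ``$p_i$ is a base point of the pencil'' (the line lies in the $\mathbb{P}^8$ of cubics through $p_i$) has codimension $2$ and class $\sigma_{(1,1)}$, with $\sigma_{(1,1)}^4=\sigma_{(4,4)}$ self-complementary, so intersecting $H_8$ with four such conditions at general points gives $c_{(4,4)}=\#\{\text{Hesse configurations through }p_1,\dots,p_4\}=6$; the remaining coefficients follow from analogous incidence counts. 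One then checks that any $8$-dimensional subvariety of $G(1,9)$ contained in $\mathbb{P}(\mathbb{S}_{(5,1)}\mathbb{C}^3)$ carries a class that is at most $[H_8]$ coefficientwise, together with a tangent-space computation at $P_0$ showing $H_8$ is the unique component of $Y$ through $P_0$; combining these forces $Y=H_8$, so $H_8$ is cut out of $G(1,9)$ by exactly the $10$ hyperplanes. Coordinating the enumerative count, the Schubert-calculus bookkeeping on $G(2,10)$, and the excess-intersection analysis is where the real work lies.

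Finally I would read off the orbit stratification and the singular locus. Since $Y$ is cut from $G(1,9)$ by an $SL(3)$-stable linear space and $O(P_0)$ is dense, the boundary $H_8\setminus O(P_0)$ is a union of orbits of dimension $\le 7$; I would enumerate them by degenerating $P_0$ along one-parameter subgroups of $SL(3)$ (weight degenerations), tracking the collapse of the nine base points and twelve lines of the Hesse configuration, and checking against the equations of $Y$ that each limit pencil lies in $H_8$ and that the list is exhaustive. This produces the $8$ additional orbits, among them $O(\langle x^3,x^2y\rangle)$ and $O(\langle x^2y,x^2z\rangle)$. For the singular locus: $O(P_0)$ is homogeneous, hence smooth, so $\mathrm{Sing}(H_8)$ is a union of boundary orbits; for one representative $Q$ of each of the $8$ boundary orbits I would compute $\dim T_Q H_8$ as the solution space of the Jacobian of the Plücker relations together with the $10$ linear forms at $Q$ (equivalently, $\mathfrak{sl}(3)\cdot Q$ plus the normal directions supplied by the degeneration). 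Exactly the two orbits $O(\langle x^3,x^2y\rangle)$ and $O(\langle x^2y,x^2z\rangle)$ yield $\dim T_Q H_8\ge 9$, while the other six yield $\dim T_Q H_8=8$; by $SL(3)$-equivariance, $\mathrm{Sing}(H_8)=O(\langle x^3,x^2y\rangle)\cup O(\langle x^2y,x^2z\rangle)$, and in particular $H_8$ is singular.
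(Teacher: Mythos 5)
Your overall architecture coincides with the paper's: identify $H_8$ with $\overline{O(\langle x^3+y^3+z^3,xyz\rangle)}$, use the skew-invariant $R$ and the decomposition $\bigwedge^2\mathrm{Sym}^3\mathbb{C}^3\cong\mathbb{S}_{(5,1)}\mathbb{C}^3\oplus\mathbb{S}_{(3,3)}\mathbb{C}^3$ to obtain $H_8\subseteq Y$, compute the multidegree of $H_8$ by Schubert calculus (with the six Hesse configurations through four general points giving the coefficient $6$ on the class of $\Omega(4,5)$), and finish with an orbit classification and a Jacobian-rank computation for the singular locus. However, there is a genuine gap at what you yourself call the crux, namely $Y\subseteq H_8$. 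Your proposed mechanism --- that ``any $8$-dimensional subvariety of $G(1,9)$ contained in $\mathbb{P}(\mathbb{S}_{(5,1)}\mathbb{C}^3)$ carries a class that is at most $[H_8]$ coefficientwise'' --- is both unjustified and logically insufficient: even granting it, a second $8$-dimensional component $Y'$ with $0\neq[Y']\leq[H_8]$ produces no contradiction, and a tangent-space computation at $P_0$ only excludes extra components meeting the dense orbit, not components disjoint from it. What closes this step in the paper is an \emph{independent} computation of the multidegree of $Y=N$ from its explicit equations (done in \texttt{Macaulay2}), which turns out to equal $(1,3,9,12,6)=[H_8]$; since the class of a nonempty effective cycle has a positive coefficient, $[Y]=[H_8]$ together with $H_8\subseteq Y$ forces the $8$-dimensional parts to coincide. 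Without some independent handle on $[Y]$ (its multidegree, its degree, or its irreducibility), your argument does not close.

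Two further points are glossed over. First, ``the remaining coefficients follow from analogous incidence counts'' hides the hardest one: the coefficient $\beta_4$ on $\Omega(3,6)$ is not the naive B\'ezout count $27$ but $27-15=12$, because the $6$-dimensional triangle locus $Tr$ meets a general $\mathbb{P}^3$ of cubics in $15$ points that must be discarded, and one must separately check, using the $2$-dimensional family of Hesse pencils through each triangle, that these contribute no genuine pencils in a general $\mathbb{P}^6$. Second, even after the top-dimensional identification, $Y$ may still contain lower-dimensional orbits not lying in $H_8$, and enumerating boundary orbits by one-parameter degenerations of $P_0$ is not guaranteed to be exhaustive for either $\overline{O(P_0)}$ or $Y$. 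The paper instead classifies \emph{all} orbits of pencils contained in $N$ directly, by intersecting $N$ with the $\mathbb{P}^8$ of pencils through a representative of each $SL(3)$-orbit of cubics, finds exactly nine orbits, and then exhibits an explicit degeneration from $O(\langle x^3+y^3+z^3,xyz\rangle)$ onto each; only then does $Y=H_8$ hold as varieties, and only then can the Jacobian-rank table be read as the singular locus of $H_8$ rather than of $N$.
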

Let
\begin{equation*}
\bigwedge\nolimits^2(\mathrm{Sym}^3\mathbb{C}^3)\cong \mathbb{S}_{(5,1)}\mathbb{C}^3\ \oplus \ 
\mathbb{S}_{(3,3)}\mathbb{C}^3
\end{equation*}
be the Schur module decomposition with respect to the action of $SL(3)$. 
The variety $H_8$ is the intersection $\mathbb{P}\left(\widehat{G(1,9)}\cap 
\mathbb{S}_{(5,1)}\mathbb{C}^3\right)$. 
\revised{In particular, the $10$ hyperplanes appearing in Theorem~\ref{teoS=N} 
are precisely those defining $\mathbb{S}_{(5,1)}\mathbb{C}^3$.} \\
To study this variety, we introduce a skew invariant of plane cubics, 
$R\in \bigwedge^3(\mathrm{Sym^3\mathbb{C}^3})$, 
\begin{equation*}
    R: \mathrm{Sym}^3\mathbb{C}^\revised{3} \times \mathrm{Sym}^3\mathbb{C}^\revised{3}\times 
    \mathrm{Sym}^3\mathbb{C}^\revised{3} \longrightarrow\mathbb{C}
\end{equation*}
such that $R(l^3,m^3,n^3)=(l\wedge m\wedge n)^3$, where $l,m$ and $n$ denote three linear forms. 
We show that this invariant satisfies the following property: 
\begin{restatable}{cor}{corfg}\label{R(f,H(f),-)=0}
     If $f$ and $g$ are two cubic forms belonging to the same pencil generated by a cubic form 
     and its Hessian, then $R(f,g,-)=0$.  
\end{restatable}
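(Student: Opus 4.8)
The plan is to reduce to the standard Hesse pencil using the $SL(3)$-equivariance of $R$, and then to exploit the large finite stabilizer of that pencil, so that no real computation is required. First I would recall the classical fact that any smooth plane cubic is $PGL(3)$-equivalent to a Hesse cubic $x^3+y^3+z^3+\lambda xyz$, and that a direct computation gives
\[
H\bigl(x^3+y^3+z^3+\lambda xyz\bigr)=-6\lambda^2\bigl(x^3+y^3+z^3\bigr)+\bigl(216+2\lambda^3\bigr)\,xyz .
\]
Smoothness forces $\lambda^3\neq-27$, and then $f$ and $H(f)$ are linearly independent, so the Hesse pencil of any smooth $f$ is $PGL(3)$-equivalent to $V:=\langle f_0,g_0\rangle$ with $f_0:=x^3+y^3+z^3$ and $g_0:=xyz$. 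Since $R$ is $SL(3)$-invariant — this is built into its construction, because the determinant of three linear forms is $SL(3)$-invariant — and $R$ is alternating and multilinear, $R(g_1,g_2,-)$ depends only on $g_1\wedge g_2\in\bigwedge^2 V$, a one-dimensional space; so it is enough to prove $R(f_0,g_0,-)=0$. Transporting by elements of $SL(3)$ covering the relevant projectivities then gives the statement for $\langle f,H(f)\rangle$ with $f$ smooth, and the remaining members of $H_8$ follow by continuity, the vanishing of $R$ being a closed condition.

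The heart of the argument is a symmetry observation. Let $G\subset SL(3)$ be the order-$27$ subgroup generated by the diagonal matrices $\mathrm{diag}(\zeta_1,\zeta_2,\zeta_3)$ with $\zeta_i^3=1$ and $\zeta_1\zeta_2\zeta_3=1$, together with the cyclic permutation $(x,y,z)\mapsto(y,z,x)$; every element of $G$ fixes both $f_0$ and $g_0$. By $SL(3)$-invariance of $R$, the linear functional $\varphi:=R(f_0,g_0,-)$ on $\mathrm{Sym}^3\mathbb{C}^3$ satisfies $\varphi(\gamma h)=R(\gamma f_0,\gamma g_0,\gamma h)=R(f_0,g_0,h)=\varphi(h)$ for all $\gamma\in G$; hence $\varphi$ is $G$-invariant and is determined by its restriction to $(\mathrm{Sym}^3\mathbb{C}^3)^G$ (it annihilates every $G$-isotypic component of non-trivial type). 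A one-line weight count gives $(\mathrm{Sym}^3\mathbb{C}^3)^G=\langle f_0,g_0\rangle$: the degree-$3$ monomials fixed by the diagonal torus are exactly $x^3,y^3,z^3,xyz$, and the $3$-cycle cuts this four-dimensional space down to $\langle x^3+y^3+z^3,\,xyz\rangle$. Finally, by the skew-symmetry of $R$ we have $\varphi(f_0)=R(f_0,g_0,f_0)=0$ and $\varphi(g_0)=R(f_0,g_0,g_0)=0$, so $\varphi$ vanishes on $(\mathrm{Sym}^3\mathbb{C}^3)^G$ and therefore $\varphi\equiv 0$.

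I expect the only delicate points to be organizational rather than mathematical: making the reduction in the first paragraph fully rigorous (this uses that $R$ is an honest $SL(3)$-invariant, available from its construction, and that $\lambda^3\neq-27$ for smooth $f$), and handling the members of $H_8$ that are not of the form $\langle f,H(f)\rangle$ with $f$ smooth by a limiting argument rather than in coordinates. If one preferred to avoid the normal-form reduction entirely, the alternative route is to evaluate $R(T_1,T_2,-)$ on two of the four triangles $T_1,T_2$ making up a Hesse pencil, expanding the products of linear forms by polarization and invoking the incidence structure of the Hesse configuration; the averaging argument above is precisely what lets one bypass that computation.
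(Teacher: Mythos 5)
Your proof is correct, but it takes a genuinely different route from the paper's. The paper first establishes the stronger pointwise statement that $R(f,H(f),-)=0$ for \emph{every} $f$ with defined Hessian, by computing in \texttt{Macaulay2} the linear syzygies among the coefficients of the Hessian and observing that they coincide with the columns of the matrix $\overline{R}$ representing the contraction of $R$; the corollary is then immediate from bilinearity and alternation, with no normal form and no limiting argument. You instead reduce to the single pencil $\langle x^3+y^3+z^3,\,xyz\rangle$ via $SL(3)$-equivariance (using that $R(g_1,g_2,-)$ depends only on $g_1\wedge g_2$), and kill the functional $\varphi=R(f_0,g_0,-)$ by averaging over the order-$27$ Hessian group, whose invariants in $\mathrm{Sym}^3\mathbb{C}^3$ are exactly the pencil itself, where skew-symmetry forces $\varphi$ to vanish. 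Your argument is computation-free and conceptually explains \emph{why} the invariant vanishes on Hesse pencils; its costs are the normal-form reduction and the continuity step needed to cover pencils $\langle h,H(h)\rangle$ with $h$ singular but Hessian-defined (which the statement allows). That step does go through --- the map $h\mapsto\langle h,H(h)\rangle$ is regular wherever $H(h)$ is defined and not proportional to $h$, and smooth cubics are dense in that locus --- but you should state it in those terms rather than as ``the remaining members of $H_8$'', since membership of such pencils in $H_8$ is itself something the paper only establishes later. Note also that the paper's explicit computation is not wasted in context: the matrix $\overline{R}$ and the ten linear forms extracted from it are reused to define $N$ and to analyze its Jacobian and singular locus.
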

We define 
\begin{equation*}
    N:= \overline{\{\langle f,g\rangle\in G(1,9)\ |\ R(f,g,-)=0\}}\subset G(1,9)
\end{equation*}
which is obtained as the intersection of ten linear equations in the Pl\"ucker coordinates 
(arising from the explicit expression of the invariant $R$) with the Pl\"ucker quadrics that 
define the Grassmannian $G(1,9)$. \\
From Corollary \ref{R(f,H(f),-)=0}, it follows that $H_8\subset N$. The key step in the proof 
of Theorem \ref{teoS=N} is to establish the equality 
\begin{equation*}
    H_8=N.
\end{equation*}
We show that $N$ and $H_8$ have the same dimension, namely $8$, and the same multidegree with 
respect to the Schubert cycles of dimension $8$ in $G(1,9)$, namely $(1,3,9,12,6)$. 
We show that $N$ and $H_8$ have the same dimension, namely $8$, and the same multidegree 
with respect to the Schubert cycles of dimension $8$ in $G(1,9)$, namely $(1,3,9,12,6)$. 
\revised{Recall that, by the Basis Theorem (see \cite{Schubertcalculus}), the cohomology group 
$H^*(G(1,9),\mathbb{Z})$ is a free abelian group generated by the Schubert cycles.
In particular, the Schubert cycles of codimension $p$ form a basis of $H^{2p}(G(1,9))$, 
so the multidegree of a subvariety of $G(1,9)$ is uniquely determined by its 
intersection numbers with the Schubert varieties of complementary dimension.}
In particular, we will compute the dimension and the multidegree of $N$ computationally, 
since its defining equations are known, and derive them theoretically for $H_8$.\\
Recall that the inflection points of a smooth cubic curve form a well-defined configuration 
known as the \textit{Hesse configuration}. The following result will be crucial in the 
proof of the multidegree of $H_8$: 
\begin{restatable}{prop}{seiconfigurazioni}\label{configurazioni4punti}
    There are exactly six Hesse configurations passing through four general points. 
    \end{restatable}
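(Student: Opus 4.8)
The plan is to use the classical fact that all Hesse configurations form a single $PGL(3,\mathbb{C})$-orbit and then convert the statement into a finite combinatorial count inside one fixed configuration.

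First I would normalize. Every smooth plane cubic can be brought by a linear change of coordinates into Hesse form $x^3+y^3+z^3+\lambda xyz$, and after this change its nine inflection points become the nine base points of the standard pencil $\langle x^3+y^3+z^3,\ xyz\rangle$. Hence every Hesse configuration equals $g(C_0)$ for some $g\in PGL(3,\mathbb{C})$, where $C_0$ denotes the set of nine base points of the standard pencil — the classical $(9_4,12_3)$ configuration, abstractly the affine plane $AG(2,\mathbb{F}_3)$, whose twelve lines are the three lines of each of the four triangles cut out by the four singular members of the standard pencil.

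Next, fix four points $p_1,\dots,p_4$ of $\mathbb{P}^2$ in general position (a projective frame). A configuration $g(C_0)$ passes through $p_1,\dots,p_4$ exactly when $g^{-1}(p_i)\in C_0$ for every $i$; since $g^{-1}$ is a collineation, $\bigl(g^{-1}(p_1),\dots,g^{-1}(p_4)\bigr)$ is then an ordered projective frame contained in $C_0$, and conversely every ordered frame $(r_1,\dots,r_4)$ in $C_0$ gives a unique $g\in PGL(3,\mathbb{C})$ with $g(r_i)=p_i$. Two choices of $g$ yield the same configuration iff they differ by an element of $\Gamma:=\mathrm{Stab}_{PGL(3,\mathbb{C})}(C_0)$, and $\Gamma$ acts freely on the set of ordered frames in $C_0$ because an element of $PGL(3,\mathbb{C})$ fixing a projective frame is the identity. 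Thus the number of Hesse configurations through $p_1,\dots,p_4$ equals the number of $\Gamma$-orbits of ordered frames in $C_0$, namely $(\#\text{ ordered frames in }C_0)/|\Gamma|$. Identifying $C_0$ with $AG(2,\mathbb{F}_3)$, a subset is a frame iff it is a cap (no three of its points collinear), and choosing the four points successively — each new point avoiding the lines spanned by pairs of the previous ones — gives $9\cdot 8\cdot 6\cdot 3=1296$ ordered frames. Since $\Gamma$ is the Hessian group $(\mathbb{Z}/3)^2\rtimes SL(2,\mathbb{F}_3)$, of order $216$, the answer is $1296/216=6$.

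The nonformal inputs are entirely classical — that the Hesse configurations form one $PGL(3,\mathbb{C})$-orbit, and the value $|\Gamma|=216$ — and I expect the delicate point to be the latter: one must check that $\Gamma$ is the full stabilizer and realizes only the index-two subgroup $(\mathbb{Z}/3)^2\rtimes SL(2,\mathbb{F}_3)$ of the abstract automorphism group $AGL(2,\mathbb{F}_3)$ of the configuration, the determinant $-1$ collineations of $AG(2,\mathbb{F}_3)$ not being induced by projectivities of the ambient plane. Getting this right is exactly what makes the answer $6$ rather than $1296/432=3$. Finally, since a Hesse configuration determines and is determined by its Hesse pencil, this yields six such pencils, which is the form in which the statement will feed into the multidegree computation for $H_8$.
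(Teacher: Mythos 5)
Your argument is correct, but it takes a genuinely different route from the paper. The paper's proof is a self-contained synthetic construction: it normalizes the four points to the standard frame, shows that a fifth point of the configuration must be one of the three diagonal points of the complete quadrilateral (giving a factor of $3$), then parametrizes a further point by $\lambda$ and derives the constraint $\lambda^2-\lambda+1=0$ from the collinearity conditions (giving a factor of $2$), explicitly exhibiting all six configurations. You instead reduce to an orbit count: since all Hesse configurations form one $PGL(3,\mathbb{C})$-orbit, the configurations through a fixed frame are in bijection with the $\Gamma$-orbits of ordered frames in a fixed configuration $C_0$, and your count $9\cdot 8\cdot 6\cdot 3=1296$ of ordered $4$-caps in $AG(2,\mathbb{F}_3)$ is right (the three secant lines of a non-collinear triple contribute six forbidden points, leaving three choices for the fourth). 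The division by $|\Gamma|=216$ then gives $6$. The trade-off is that your two classical inputs are not free: the transitivity statement and, above all, the identification of the stabilizer with the Hessian group $(\mathbb{Z}/3)^2\rtimes SL(2,\mathbb{F}_3)$ of order $216$ rather than the full abstract automorphism group $AGL(2,\mathbb{F}_3)$ of order $432$ --- a point you correctly flag as the crux, since it is the difference between the answers $6$ and $3$, and proving it from scratch is comparable in effort to the paper's direct computation. One small point worth making explicit in your write-up: a combinatorial $4$-cap in $C_0$ is an honest projective frame of $\mathbb{P}^2$ because every line of $\mathbb{P}^2$ containing two points of the configuration is one of the twelve configuration lines, so collinearity in $\mathbb{P}^2$ and in $AG(2,\mathbb{F}_3)$ agree. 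What your approach buys is conceptual clarity and a clean group-theoretic explanation of the number $6$; what the paper's buys is elementariness, independence from the classification of the Hessian group, and explicit coordinates for the six configurations.
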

From these results, it follows that $H_8$ and $N$ coincide in dimension $8$, but there 
may exist lower-dimensional subvarieties of $N$ not contained in $H_8$. To complete the 
proof of Theorem \ref{teoS=N}, we classify all orbits of pencil contained in $N$, and 
show that for each of them, one can find a degeneration of pencils in $H_8$ that has 
the limit to the given pencil. \\
Finally, to study the singular locus, it is enough to evaluate the rank of the Jacobian 
matrix obtained by the equations of $N$. \\

After this paper was written, I kindly received from Vladimir Popov two interesting 
papers~\cite{Pop1,Pop2}, where the 9-dimensional variety $X$ of flexes of plane cubics is 
constructed and studied. There exists a natural map $X \to H_8$ whose general fiber consists 
of the union of nine rational curves, relating $X$ to our variety $H_8$.

\section{Binary Quartics}
We denote by $[x,y]$ the homogeneous coordinates of $\mathbb{P}^1$. A quartic $f$ in 
this space is a polynomial
\begin{equation*}
    f=a_0x^4+4a_1x^3y+6a_2x^2y^2+4a_3xy^3+a_4y^4.
\end{equation*}
It can be identified with the coefficients $(a_0,..,a_4)$ of this polynomial and regarded 
as a point in $\mathbb{P}^4$. The Hessian of a quartic, named $H(f)$, is still a quartic 
whose coefficients are polynomial of degree $2$ in the $a_i$. \\
Let $Z$ denote the variety of quartic forms for which the Hessian \revised{map} is not defined. 
This variety has dimension $1$. 
Moreover, Z coincides with the variety of cones (see \cite{Coni}), that is, with the set of 
quartics defines by equations of the form:
\begin{equation*}
    f=(b_0x+b_1y)^4
\end{equation*}
$Z$ is called the $quartic \ rational  \ normal  \ curve$.\\
Let us consider quartic curves of the form 
\[
f=(c_0x^2+c_1xy+c_2y^2)^2,
\] 
that is, squares of quadratic forms. Denote by $Sq$ the Zariski closure of this set. 
Equivalently, $Sq$ can be characterized as the locus of quartics that coincide with their own Hessian. 
The variety $Sq$ is two-dimensional and contains the previously defined variety $Z$, i.e.\ $Z \subset Sq$. 
Moreover, it is cut out by seven cubic equations in the coefficients $a_i$, arising from the vanishing 
of the coefficients of
\[
\det\begin{pmatrix}
    f_x & f_y \\
    H(f)_x & H(f)_y
\end{pmatrix},
\]
which expresses the condition that $f$ and $H(f)$ are proportional, i.e. that the two quartics coincide 
in $\mathbb{P}^4$. \revised{Finally, $Sq$ is smooth: a computation in Macaulay2 
shows that the Jacobian matrix has maximal rank everywhere}.
\\

The special linear group $SL(2)$ acts on the space of quartic forms, 
and the Hessian map is equivariant with respect to this action (see \cite[Prop.~4.4.2]{Invariantstrumfels}), meaning that 
\begin{equation}
    H(C\cdot f)=C\cdot H(f)\ \ \ \ \ \ \forall \ C\in SL(2).
\end{equation}
 There are infinitely many \revised{orbits} containing quartics with four distinct roots, 
 and four additional 
 orbits containing singular quartics. \revised{The orbits are listed in Table~\ref{tab:my_label}, 
 adapted from \cite[p.~29]{Invariantiquartiche}. We denote by $O(f)$ the orbit of the quartic $f$.}

\begin{table}[H]
\begin{adjustwidth}{-2cm}{-2cm}
\centering
\renewcommand{\arraystretch}{1.5}
    \begin{tabular}{c|c|c|c}
    \textbf{Orbit representatives} & \textbf{Hessian} & \textbf{Description} 
    &\textbf{dim($\overline{O(f)}$)} \\ \hline 
     $x^4+6\lambda x^2y^2+y^4 \ \ \ \ \lambda\neq \pm \frac{1}{3}$& $x^4+y^4+6\frac{1-3\lambda^2}{6\lambda}x^2y^2\ \ \ \ 
     \lambda \neq 0$ & $simple\ \  roots$&3 \\
    \hline
      $x^4+x^2y^2$   & $6x^4-x^2y^2$& $one\ \ double\ \ root$&3\\ \hline
       $x^2y^2$  & $x^2y^2$ & $two\ \ double\ \ roots$&2 \\ \hline
       $x^3y$ &$x^4$ & $triple\ \ root$&2\\ \hline 
       $x^4$ & 0 & $quadruple\ \ root$&1
    \end{tabular}
    \caption{Orbits of binary quartics under the action of $SL(2)$.}
    \label{tab:my_label}
\end{adjustwidth}
\end{table}

\subsection{The Hesse Pencil Variety of Binary Quartics}
\begin{defn}[The Hesse Pencil]
    Let $f$ be a quartic with four distinct roots. The pencil generated by $f$ and its Hessian $H(f)$, 
    that is 
    \begin{equation*}
        \langle f,H(f)\rangle =\lambda f + \mu H(f)  \ \ \ \ \ \ \ \ \lambda, \mu \in \mathbb{C},
    \end{equation*} is called  $ Hesse\ pencil$. 
\end{defn}

\begin{prop}\label{pencilquadriche}
    Let $L$ be a Hesse pencil, $L=\langle f,H(f)\rangle$. Then, the Hessian of every quartic on the 
    line $L$ still lies on $L$. \revised{Moreover, if $q\in L\setminus Sq$, then every quartic $p$ with $H(p)=q$
    also lies in $L$.} 
\end{prop}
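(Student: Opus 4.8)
The plan is to reduce everything to a single normal form and then split into an explicit Hessian computation for the first assertion and an analysis of the fibres of the Hessian map for the second.

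\emph{Reduction to a normal form.} Because the Hessian is $SL(2)$-equivariant, the map $f\mapsto\langle f,H(f)\rangle$ is equivariant, so both assertions are $SL(2)$-invariant. Every quartic with four simple roots is $SL(2)$-equivalent to one of the form $f_\lambda=x^4+6\lambda x^2y^2+y^4$ with $\lambda\ne\pm\tfrac13$ (the orbit representatives recorded in the table above), and $H(f_\lambda)=x^4+y^4+\tfrac{1-3\lambda^2}{\lambda}\,x^2y^2$. Both $f_\lambda$ and $H(f_\lambda)$ lie in $W:=\langle x^4+y^4,\ x^2y^2\rangle$, and they span it precisely when $\lambda\ne\pm\tfrac13$. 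Hence every Hesse pencil equals $\mathbb P(C\!\cdot\! W)$ for some $C\in SL(2)$, and it suffices to prove both statements for the single pencil $L=\mathbb P(W)$.

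\emph{First assertion.} Compute the Hessian of a general member of $W$: writing $g=a(x^4+y^4)+bx^2y^2$ one gets
\[
H(g)=24ab\,(x^4+y^4)+(144a^2-12b^2)\,x^2y^2\in W .
\]
Moreover $W$ meets neither the locus $Z$ of cones nor the locus where the Hessian vanishes identically, so $H$ restricts to an honest morphism $H|_L\colon L\to L$; in the coordinate $[a:b]$ on $\mathbb P(W)$ it is the degree-two self-map $[a:b]\mapsto[\,2ab:12a^2-b^2\,]$ of $\mathbb P^1$, whose three fixed points are exactly $L\cap Sq$. This says precisely that the Hessian of every quartic on $L$ again lies on $L$.

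\emph{Second assertion.} Let $\widetilde W=\{h\in\mathbb P^4: H(h)\in W\}$, cut out by three explicit quadratic equations in the coefficients of $h$ — the vanishing of the coefficients of $x^3y$ and of $xy^3$ in $H(h)$, and the equality of the coefficients of $x^4$ and of $y^4$. Clearly $W\subseteq\widetilde W$. The heart of the argument is to decompose $\widetilde W$ into irreducible components and to verify that every component other than $W$ is either contained in $Z$ (where $H$ is undefined) or is carried by $H$ entirely into $Sq$; the extra components turn out to be pencils such as $\mathbb P\langle x^4,y^4\rangle$ and the families $\{c_0(x^4+6x^2y^2+y^4)\pm c_1\,xy(x^2\pm y^2)\}$, each of which indeed lands in $Sq$. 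Granting this, if $g\in L\setminus Sq$ and $H(h)\propto g$, then $H(h)\in W\setminus Sq$, so $h\in\widetilde W$ with $h\notin Z\cup H^{-1}(Sq)$, which forces $h\in W=L$. (Alternatively, one can argue enumeratively: the Hessian map $\mathbb P^4\dashrightarrow\mathbb P^4$ has degree $2$ — its base locus is the rational normal quartic $Z$, and resolving it gives $(2h-E)^4=16-8(hE^3)+E^4=16-32+18=2$ — so since $H|_L$ already has degree $2$, for a general $g\in L$ the two preimages of $g$ lie on $L$; one then extends to all of $L\setminus Sq$ by properness of the incidence curve, using that points of $L\setminus Sq$ have finite $SL(2)$-stabilizer.)

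The main obstacle is the second assertion. In the first approach it is the explicit primary decomposition of $\widetilde W$ (finding \emph{all} components and checking each extra one maps into $Sq$); in the enumerative approach it is the degree computation for the Hessian map together with the control of its fibres over the non-generic points of $L\setminus Sq$, where a naive degree count can fail and one must fall back on the $SL(2)$-equivariance and the explicit stabilizers.
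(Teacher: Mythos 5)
Your reduction to the normal form $W=\langle x^4+y^4,\,x^2y^2\rangle$ and the computation $H\bigl(a(x^4+y^4)+bx^2y^2\bigr)=24ab(x^4+y^4)+(144a^2-12b^2)x^2y^2$ is exactly the paper's treatment of the first assertion: the paper uses the chart $x^4+6\lambda x^2y^2+y^4$ and the map $\lambda\mapsto\frac{1-3\lambda^2}{6\lambda}$, which is your degree-two self-map $[a:b]\mapsto[2ab:12a^2-b^2]$ in other coordinates, with the same three fixed points $L\cap Sq$. For the second assertion, your ``enumerative'' alternative \emph{is} the paper's argument ($H|_L$ has degree $2$, the global Hessian map has degree $2$, hence the two preimages inside $L$ are all of them); what you add is an actual justification of the global degree, which the paper merely asserts. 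Your blow-up count $(2h-E)^4=16-32+18=2$ is correct, but you should record why blowing up the reduced curve $Z$ suffices: the five Hessian quadrics are linear combinations of the $2\times2$ minors of the Hankel matrix, which generate the saturated ideal of the rational normal quartic. Your primary route via the primary decomposition of $\widetilde W=H^{-1}(W)$ is genuinely different from the paper but is only sketched (``granting this''), and would be more work for no extra payoff. Finally, both you and the paper are thin on passing from a \emph{generic} point of $L$ to \emph{every} point of $L\setminus Sq$: at the two branch points of $H|_L$ (equianharmonic quartics, not in $Sq$) the degree count alone leaves room for one preimage off $L$. A clean way to close this, needing neither the degree of $H$ nor stabilizers, is to use the first assertion itself: every member of $L\setminus Sq$ has simple roots, and a quartic $h$ with a repeated linear factor $l$ has $l^2\mid H(h)$, so any $h$ with $H(h)=g\in L\setminus Sq$ has simple roots; then the Hesse pencil $\langle h,H(h)\rangle=\langle h,g\rangle$ contains $H(g)$ by the first assertion, hence contains the line $\langle g,H(g)\rangle=L$ and equals it, forcing $h\in L$.
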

\begin{proof}
Every Hesse pencil is projectively equivalent to a pencil of the form
\[
x^4 + 6\lambda x^2y^2 + y^4, \quad \lambda \in \mathbb{C}\cup \{\infty\},
\]
where \(\lambda=\infty\) corresponds to \(x^2y^2\). A direct computation shows that the Hessian 
of any quartic in this pencil again belongs to the same pencil.  

Consider the map \(\lambda \mapsto \frac{1-3\lambda^2}{6\lambda}\) that sends a quartic to the 
parameter of its Hessian in the canonical form. This map is $2:1$, showing that a generic quartic 
in the pencil is the Hessian of exactly two other quartics in the same pencil. 
Moreover, the Hessian map has degree $2$ (see \revised{\cite[Theorem~4.1]{Hessianmap}}), which 
ensures that all \revised{preimages} of a generic quartic lie in the same pencil.  

The exceptions are the three quartics that coincide with their own Hessian, namely the points in $Sq$, 
where the fiber of the Hessian map is not finite. Solving \(6\lambda^2 = 1-3\lambda^2\) together 
with \(\lambda=\infty\) identifies precisely these three quartics.
\end{proof}

\defX*
As seen in the proof of Proposition \ref{pencilquadriche}, all Hesse pencils lie in the orbit of 
$\langle x^4+y^4,x^2y^2\rangle$, and therefore we have
\begin{equation*}
    H_3=\overline{O(\langle x^4+y^4,x^2y^2\rangle)}
\end{equation*}
\begin{prop}
    The Hesse pencil variety of binary quartics $H_3$ has dimension $3$. 
\end{prop}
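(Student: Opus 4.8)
The plan is to exhibit an explicit dense subset of $H_3$ that is the image of a rationally parametrized family and then check that the parametrization is generically finite onto its image. Concretely, by the discussion preceding the statement, every Hesse pencil is $SL(2)$-equivalent to one of the form $\langle x^4+y^4,\ x^2y^2\rangle$, so $H_3$ is the Zariski closure of the orbit $O(\langle x^4+y^4,x^2y^2\rangle)$. Thus I would compute $\dim H_3 = \dim \overline{O(L_0)}$ where $L_0 = \langle x^4+y^4,x^2y^2\rangle \in G(1,4)$, and this equals $\dim SL(2) - \dim \operatorname{Stab}_{SL(2)}(L_0)$, where the stabilizer is taken in the action on the Grassmannian (i.e. the subgroup of $C\in SL(2)$ fixing the pencil as a point of $G(1,4)$, not necessarily fixing $f$ and $H(f)$ individually).

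The first step is therefore to identify $\operatorname{Stab}_{SL(2)}(L_0)$. I expect this to be a finite group: an element $C$ stabilizing the pencil must permute (projectively) the distinguished quartics in the pencil — in particular the three elements of $L_0 \cap Sq$ (the quartics equal to their own Hessian) and the four singular quartics in the pencil — and the induced action on this finite configuration, together with the fact that a projective transformation of $\mathbb{P}^1$ fixing three points is the identity, forces $C$ to lie in a finite subgroup. Concretely one checks that the stabilizer is generated by the coordinate swap $x\leftrightarrow y$, the scalings $x\mapsto ix$, and so on — a finite group (of order $8$, or at most some small number; the exact order is irrelevant for the dimension count). Hence $\dim \operatorname{Stab}_{SL(2)}(L_0) = 0$ and $\dim \overline{O(L_0)} = \dim SL(2) = 3$.

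Alternatively, and perhaps more transparently, I would argue directly: the map $SL(2)/\{\pm 1\} = \mathrm{PGL}(2) \to G(1,4)$, $C \mapsto C\cdot L_0$, has image $O(L_0)$; a generic quartic $f$ with four simple roots lies in exactly one Hesse pencil (its own), so the assignment $f \mapsto \langle f, H(f)\rangle$ is defined on a dense open subset of $\mathbb{P}^4$ and its fibers contain the whole pencil through $f$, hence are at least $1$-dimensional; conversely the generic fiber is exactly the pencil (two quartics in a pencil generate the same line), so the image has dimension $4 - 1 = 3$. Either route gives $\dim H_3 = 3$.

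The main obstacle is making precise that $\operatorname{Stab}_{SL(2)}(L_0)$ is finite — i.e. ruling out a positive-dimensional subgroup of $SL(2)$ fixing the pencil. This is where one must use genuinely the structure of $L_0$: a one-parameter subgroup fixing $L_0$ would fix each of the finitely many special quartics in $L_0$ (the cones, the squares, the singular members), and a nontrivial one-parameter subgroup of $\mathrm{PGL}(2)$ has at most two fixed points on $\mathbb{P}^1$, which is incompatible with fixing, say, the four roots of a simple-root member of the pencil. Once finiteness of the stabilizer is in hand, the orbit-dimension formula closes the argument immediately; the remaining verifications (that the distinguished configurations are as claimed) are the routine computations already sketched in the proof of Proposition \ref{pencilquadriche}.
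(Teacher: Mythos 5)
Your proposal is correct, and in fact contains two valid arguments. Your ``alternative'' argument is precisely the paper's proof: the paper sets up the incidence variety $P=\{(f,L)\in\mathbb{P}^4\times H_3 \mid f\in L\}$, notes that the first projection is generically finite (a generic quartic with simple roots lies in a unique Hesse pencil, by Proposition \ref{pencilquadriche}) so that $\dim P=4$, and that the fibers of the second projection are the pencils themselves, of dimension $1$, giving $\dim H_3=4-1=3$. Your primary argument via the orbit--stabilizer formula, $\dim\overline{O(L_0)}=\dim SL(2)-\dim\operatorname{Stab}(L_0)$ with $L_0=\langle x^4+y^4,x^2y^2\rangle$, is a genuinely different route that the paper does not take for this proposition (though it implicitly underlies the later identification $H_3=\overline{O(L_0)}$). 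Its one delicate point is the finiteness of the stabilizer, which you handle correctly: a connected one-parameter subgroup preserving the pencil must fix each of the three members of $L_0\cap Sq$ individually, hence fix their six roots $\{0,\infty,\pm 1,\pm i\}$ pointwise, which forces it to be trivial since a nontrivial element of $\mathrm{PGL}(2)$ fixes at most two points. The incidence-variety argument is shorter and avoids computing any stabilizer; the orbit-stabilizer argument has the advantage of generalizing immediately to the plane-cubic case $H_8$ (where one would check that the stabilizer of $\langle x^3+y^3+z^3,xyz\rangle$ in $SL(3)$ is finite, namely the Hessian group), whereas the paper instead repeats the incidence-variety count there.
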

\begin{proof}
    Consider 
    \[
        P:=\{(f,L)\in \mathbb{P}^4\times H_3 \mid f\in L\}\subset \mathbb{P}^4 \times G(1,4),
    \]
    with projections $p_1:P\to \mathbb{P}^4$ and $p_2:P\to H_3$.  
    For a generic $f\in \mathbb{P}^4$, the fiber $p_1^{-1}(f)$ is finite (in fact, it consists of 
    a unique $L=\langle f,H(f)\rangle$), hence $\dim(P)=4$.  
    On the other hand, for $L\in H_3$, the fiber $p_2^{-1}(L)$ is the line $L$ itself, so $\dim(p_2^{-1}(L))=1$.  
    It follows that
    \[
        \dim(P)=\dim(H_3)+1=4 \quad\Rightarrow\quad \dim(H_3)=3. 
    \]
\end{proof}

To derive the equations defining $H_3$ in Pl\"{u}cker coordinates, consider the $2\times5$ matrix 
whose first row consists of the coefficients $a_i$ of a generic quadric, and whose second row 
consists of the corresponding coefficients of its Hessian, expressed in terms of the $a_i$: 
\[
A=
\begin{bmatrix}
    a_0& a_1&a_2&a_3&a_4\\
    -6a_1^2+6a_0a_2& -3a_1a_2+3a_0a_3& -3a_2^2+2a_1a_3+a_0a_4& -3a_2a_3+3a_1a_4& -6a_3^2+6a_2a_4
 \end{bmatrix}.
 \]
The Pl\"{u}cker coordinates of the line defined by $f$ and $H(f)$ are then given by the $2\times2$ 
minors of this matrix, namely
\[
    p_{(i,j)}=\det(A_i|A_j), \qquad 0\leq i<j\leq 4,
\]
where $A_i$ denotes the $i$-th column of $A$. Hence the Pl\"{u}cker coordinates are cubic expressions in the $a_i$.  
Eliminating the variables $a_i$ yields an ideal containing the relations among the $p_{(i,j)}$.  \\
This computation was performed using \texttt{Macaulay2} (see \cite{M2}). The result shows that $H_3$ 
is defined by eight equations: 
three linear and five quadratic. The quadratic ones coincide with the Plücker relations defining the 
Grassmannian $G(1,4)$. 
Thus \revised{$H_3$} is the intersection of $G(1,4)\subset \mathbb{P}^9$ with three hyperplanes. 
In particular, the linear equations defining $H_3$ are
\begin{equation*}
    3p_{(2,3)}-p_{(1,4)}, \quad 
    2p_{(1,3)}-p_{(0,4)}, \quad 
    3p_{(1,2)}-p_{(0,3)}.
\end{equation*}

Let us now consider the vector space $\bigwedge^2 Sym^4\mathbb{C}^2$, which has dimension \revised{$10$}. 
We can naturally view the Grassmannian $G(1,4)$ as a subvariety of $\mathbb{P} 
\left( \bigwedge^2 Sym^4\mathbb{C}^2\right)$. 
The latter space decomposes into Schur modules as follows: 
\begin{equation}\label{decomposition}
\bigwedge\nolimits^2(Sym^4\mathbb{C}^2)\cong\mathbb{S}_{(7,1)}\mathbb{C}^2\ \oplus \ \mathbb{S}_{(5,3)}\mathbb{C}^2.
\end{equation}
Since the representations of $SL(2)$ are isomorphic to their duals, we can also write
\begin{equation*}
    \bigwedge\nolimits^2(Sym^4\mathbb{C}^2)\cong Sym^6\mathbb{C}^2\  \oplus\ Sym^2 \mathbb{C}^2.
\end{equation*} 

\begin{prop}\label{propA-F}
    With the above notation, we have 
    \begin{equation}
        \mathbb{P}\left(\widehat{G(1,4)}\cap Sym^6\mathbb{C}^2\right)=H_3
    \end{equation}
    where $\widehat{G(1,4)}$ denotes the affine cone over $G(1,4)$. 
\end{prop}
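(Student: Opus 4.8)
The plan is to combine the explicit linear equations for $H_3$ obtained above with the $SL(2)$-equivariance of the Hessian map, so that identifying $V(\ell_1,\ell_2,\ell_3)$ with $\mathbb{P}(\mathrm{Sym}^6\mathbb{C}^2)$ becomes a matter of representation-theoretic bookkeeping rather than a fresh computation.

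First I would record what is already known: inside $\mathbb{P}^9=\mathbb{P}(\bigwedge^2\mathrm{Sym}^4\mathbb{C}^2)$ one has $H_3=G(1,4)\cap V(\ell_1,\ell_2,\ell_3)$, where $\ell_1=3p_{(2,3)}-p_{(1,4)}$, $\ell_2=2p_{(1,3)}-p_{(0,4)}$ and $\ell_3=3p_{(1,2)}-p_{(0,3)}$. These three linear forms involve pairwise disjoint Plücker coordinates, so they are linearly independent in $(\bigwedge^2\mathrm{Sym}^4\mathbb{C}^2)^\ast$; write $U$ for their span, so $\dim U=3$. Next I would use that the Hessian map is $SL(2)$-equivariant, whence the set of Hesse pencils and its closure $H_3$ are $SL(2)$-invariant (equivalently, $H_3=\overline{O(\langle x^4+y^4,x^2y^2\rangle)}$, as already noted). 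Therefore the linear span $\langle H_3\rangle\subseteq\mathbb{P}^9$ is $SL(2)$-invariant, so the space $I_1$ of linear forms vanishing on $H_3$ is an $SL(2)$-subrepresentation of $(\bigwedge^2\mathrm{Sym}^4\mathbb{C}^2)^\ast\cong\mathrm{Sym}^6\mathbb{C}^2\oplus\mathrm{Sym}^2\mathbb{C}^2$.

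The main point is then a short dimension count. Since $\mathrm{Sym}^6\mathbb{C}^2$ and $\mathrm{Sym}^2\mathbb{C}^2$ are non-isomorphic irreducibles, the subrepresentations of the dual have dimensions $0,3,7,10$ only. On the one hand $U\subseteq I_1$ forces $\dim I_1\ge 3$; on the other hand, since $\dim H_3=3$ the variety cannot lie in a linear subspace of projective dimension $<3$, so $\dim\langle H_3\rangle\ge 3$ and hence $\dim I_1\le 9-3=6$. The only admissible value is $\dim I_1=3$, so $I_1=U$ and $U$ is exactly the $\mathrm{Sym}^2\mathbb{C}^2$-summand of the dual. Dually, the common zero locus of $U$ is the $\mathrm{Sym}^6\mathbb{C}^2$-summand of $\bigwedge^2\mathrm{Sym}^4\mathbb{C}^2$, so $V(\ell_1,\ell_2,\ell_3)=\mathbb{P}(\mathrm{Sym}^6\mathbb{C}^2)$ and therefore
\[
\mathbb{P}\!\left(\widehat{G(1,4)}\cap\mathrm{Sym}^6\mathbb{C}^2\right)=G(1,4)\cap\mathbb{P}(\mathrm{Sym}^6\mathbb{C}^2)=G(1,4)\cap V(\ell_1,\ell_2,\ell_3)=H_3.
\]
The only step that needs care is the dimension bookkeeping pinning down $I_1$; everything else is formal. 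Alternatively, one could bypass the representation theory and check directly that $U$ is stable under the raising and lowering operators of $\mathfrak{sl}_2$, but the argument above avoids that calculation.
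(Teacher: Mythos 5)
Your proof is correct and follows essentially the same route as the paper: both identify the span of the three linear forms with the unique $3$-dimensional $SL(2)$-subrepresentation $\mathrm{Sym}^2\mathbb{C}^2$ in the decomposition of $\bigwedge^2(\mathrm{Sym}^4\mathbb{C}^2)$, so that their common zero locus is $\mathbb{P}(\mathrm{Sym}^6\mathbb{C}^2)$. Your dimension count pinning down $\dim I_1=3$ from the admissible values $0,3,7,10$ is a slightly more careful justification than the paper's, which simply reads off the $3$-dimensionality of the linear part of the ideal from the \texttt{Macaulay2} computation.
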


\begin{proof}
The variety $H_3$ is invariant under the action of $SL(2)$. The three linear equations defining 
$H_3$ span a $3$–dimensional $SL(2)$–stable subspace. By the decomposition in \ref{decomposition}, 
$\mathbb{S}_{(5,3)}\mathbb{C}^2$ is the unique $3$–dimensional $SL(2)$–invariant subspace.

Hence, the vanishing of these three linear forms defines the dual subspace, i.e., it projects onto 
the complementary summand $Sym^6\mathbb{C}^2$. Intersecting the affine cone $\widehat{G(1,4)}$ with 
this subspace and projectivizing gives precisely $H_3$, as claimed.
\end{proof}

From Proposition~\ref{propA-F} it follows that the variety $H_3$ admits a natural embedding in the 
space of binary sextics, providing an explicit correspondence between pencils of quartics and binary 
sextics. In particular, the pencil $\langle x^4+y^4, \, x^2y^2\rangle$ corresponds to the sextic $x^5y-xy^5$. 
This correspondence can be obtained explicitly using the decomposition of \(\bigwedge^2 Sym^4 \mathbb{C}^2\) 
into irreducible $SL(2)$--modules. 

The variety $H_3$ has been previously studied in \cite{Aluffi-Faber}. The results presented in this section 
therefore recover classical constructions, offering an alternative perspective on well-known facts.

\begin{oss}\label{canonicalbundlequartiche}
    We have seen that the variety $H_3$ is given by the intersection of the Grassmannian $G(1,4)$ with 
    three hyperplanes. According to \cite{Mukai}, it is a Fano 3-fold. A Fano variety $X$ is characterized 
    by having ample anticanonical bundle. In particular, for the Grassmannian $G(1,4)$, the canonical 
    bundle satisfies $K_{G(1,4)}\cong O(-5)$. By intersecting with three hyperplanes, we obtain a 3-fold 
    $H_3$ with $K_{H_3}\cong O(-2)$.
\end{oss} 

We now apply Schubert calculus to compute the multidegree of $H_3$ with respect to Schubert cycles.  
In $G(1,4)$ there are nine Schubert cycles generating $H^*(G(1,4),\mathbb{Z})$. Each of them 
corresponds to a Young tableau that can be embedded in the representative $2\times 5$ matrix 
\revised{(see \cite{FultonHook})}. 
Moreover, for a partition $\lambda$, one has $\mathrm{codim}(\Omega(\lambda))=\#\{\text{boxes in }\lambda\}$.\\
Since $H_3$ has dimension $3$, the relevant part of the cohomology is $H^6(G(1,4),\mathbb{Z})$. 
By the Basis Theorem in \cite{Schubertcalculus}, this group has two generators, represented by the tableaux
\begin{align*}
    \begin{ytableau}
      \empty  &\empty  & \empty
    \end{ytableau}
    &\qquad\qquad
    \begin{ytableau}
        \empty&\empty \\
        \empty\\
    \end{ytableau}
\end{align*} 
corresponding to the Schubert varieties $\Omega(0,4)$ and $\Omega(1,3)$, both of dimension $3$.\\
It follows that the degree of $H_3$ can be written as
\begin{equation}\label{multidegr}
    \deg(H_3)=\alpha\ \deg(X_{\scalebox{0.3}{\begin{ytableau}
        \empty&\empty &\empty \\
    \end{ytableau}}})  + \beta\ \deg(X_{\scalebox{0.3}{\begin{ytableau}
        \empty & \empty \\
        \empty \\
    \end{ytableau}}}),
\end{equation}
where the coefficients $\alpha$ and $\beta$ are given by the intersection numbers
\[
\alpha=\deg(H_3\cap X_{\scalebox{0.3}{\begin{ytableau}
        \empty&\empty &\empty \\
    \end{ytableau}}}), \qquad
\beta=\deg(H_3\cap X_{\scalebox{0.3}{\begin{ytableau}
        \empty &\empty \\
        \empty\\
    \end{ytableau}}}),
\]
since $\Omega(0,4)$ and $\Omega(1,3)$ are self-dual.  
These degrees can be computed using the hook-length formula in \cite{FultonHook}:
\[
    \deg\!\left(X_{\scalebox{0.3}{\begin{ytableau}
        \empty&\empty &\empty \\
    \end{ytableau}}}\right)=\frac{3!}{3\cdot 2\cdot 1}=1,
    \qquad
    \deg\!\left(X_{\scalebox{0.3}{\begin{ytableau}
        \empty &\empty \\
        \empty\\
    \end{ytableau}}}\right)=\frac{3!}{3\cdot 1 \cdot 1}=2.
\]

\begin{prop}
With the above notation, the multidegree of the variety $H_3$ is
\[
    \alpha=1, \qquad \beta=2.
\] 
In particular, this is compatible with the known fact that the degree of $H_3$ is $5$.
\end{prop}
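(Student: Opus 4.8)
The plan is to compute the two intersection numbers $\alpha=\deg(H_3\cap\Omega(0,4))$ and $\beta=\deg(H_3\cap\Omega(1,3))$ directly, using the self–duality recorded above together with the concrete descriptions: $\Omega(0,4)$ is the set of lines of $\mathbb{P}^4$ through a fixed point $p$, and $\Omega(1,3)$ is the set of lines contained in a fixed hyperplane $V\cong\mathbb{P}^3$ and meeting a fixed line $\ell\subset V$. Since $H_3$ is smooth and $PGL_5$ acts transitively on $G(1,4)$, Kleiman's transversality theorem guarantees that for general $p$, $V$, $\ell$ these intersections are transverse, hence finite and reduced, so $\alpha$ and $\beta$ are literally the numbers of points.

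For $\alpha$: a point of $H_3\cap\Omega(0,4)$ is a line $L\in H_3$ with $p=[f]\in L$. The honest Hesse pencils are dense in the irreducible threefold $H_3$, so their complement is contained in a proper closed subset of dimension at most $2$, and the lines parametrized by that subset sweep out a subvariety of $\mathbb{P}^4$ of dimension at most $3$; therefore for general $f$ every $L\in H_3$ containing $f$ is a genuine Hesse pencil $\langle g,H(g)\rangle$. By Proposition~\ref{pencilquadriche} the Hessian of each quartic on $L$ again lies on $L$, so $H(f)\in L$, and as $H(f)\notin\mathbb{C}f$ for general $f$ this forces $L=\langle f,H(f)\rangle$. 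Hence $H_3\cap\Omega(0,4)$ is a single reduced point and $\alpha=1$; equivalently, the first projection $p_1\colon P\to\mathbb{P}^4$ of the incidence variety $P$ introduced above is birational.

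For $\beta$ I would exhibit the surface ruled by the Hesse pencils lying in a general hyperplane $V=\{\phi=0\}$. A Hesse pencil $\langle g,H(g)\rangle$ lies in $V$ exactly when $\phi(g)=0$ and $\phi(H(g))=0$; since the coefficients of $H(g)$ are quadratic in those of $g$, the latter condition defines a quadric $Q\subset\mathbb{P}^4$, so the quartics $g$ with $\langle g,H(g)\rangle\subset V$ fill the surface $S:=V\cap Q$, which for general $V$ is a smooth quadric surface in $V\cong\mathbb{P}^3$. Conversely $g\in S$ implies $\langle g,H(g)\rangle\subset V\cap Q=S$, so $S$ is covered by Hesse pencils, each one a line on $S$. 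Consequently the curve $C:=H_3\cap G(1,V)$, being the closure of the image of the irreducible surface $S$ under the rational map $g\mapsto\langle g,H(g)\rangle$ (whose fibres are these lines), is irreducible and contained in the Fano scheme of lines on $S$; for a smooth quadric surface the latter is a disjoint union of two conics in $G(1,V)\subset\mathbb{P}^5$, so $C$ is one of them, of degree $2$. Since $\Omega(1,3)$ restricts on $G(1,V)$ to the hyperplane class, $\beta=\deg_{\mathbb{P}^5}C=2$.

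I expect the delicate point to be the $\beta$ step: one must verify that for general $V$ the surface $S=V\cap Q$ is indeed a smooth quadric and that $H_3\cap G(1,V)$ is the reduced, irreducible curve $C$ equal to a single ruling of $S$ — both of which follow from Kleiman transversality (using that $H_3$ is smooth) together with the irreducibility of $S$, but should be written out with care. As an independent verification one can compute $\alpha$ and $\beta$ from the explicit Plücker equations of $H_3$ using \texttt{Macaulay2}; moreover the value $\alpha\cdot 1+\beta\cdot 2=5$, via the hook–length degrees $1$ and $2$ computed above, is consistent with the classical description of $H_3$ as the quintic del Pezzo threefold of degree $5$ (cf.\ \cite{Mukai}), so that conversely $\beta=2$ is already forced once $\alpha=1$ and $\deg H_3=5$ are known.
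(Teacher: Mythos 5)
Your computation of $\alpha$ is the paper's argument: a general $f$ lies on a unique line of $H_3$, namely $\langle f,H(f)\rangle$, by Proposition~\ref{pencilquadriche} (your extra care in excluding the boundary orbits, which sweep out at most a threefold, is a welcome tightening). For $\beta$, however, you take a genuinely different route. The paper does not compute $\beta$ geometrically at all: it takes $\deg H_3=5$ as known and solves $\alpha\cdot 1+\beta\cdot 2=5$ for $\beta$. You instead intersect $H_3$ with $G(1,V)$ for a general hyperplane $V=\{\phi=0\}$, observe that the quartics $g$ with $\langle g,H(g)\rangle\subset V$ form the quadric surface $S=V\cap\{\phi(H(g))=0\}$ (quadratic because the Hessian coefficients are quadratic in the $a_i$), identify $H_3\cap G(1,V)$ with one ruling of $S$, and read off $\beta=\deg_{\mathbb{P}^5}C=2$ since $\Omega(1,3)$ restricts to the hyperplane class on $G(1,V)$. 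This is correct and buys something the paper's proof does not: it is independent of the ``known fact'' that $\deg H_3=5$, and in fact re-derives it. The points you flag as delicate are the right ones; I would add that even if $S$ degenerates to an irreducible quadric cone the reduced Fano curve is still a conic of degree $2$, and that Kleiman transversality (smoothness of $H_3$ plus transitivity of $PGL_5$) guarantees $H_3\cap G(1,V)$ is reduced of pure dimension $1$, so no contribution can come from the $2$-dimensional boundary orbits. Your closing remark — that $\beta=2$ is forced by $\alpha=1$ and $\deg H_3=5$ — is precisely the paper's proof, so your writeup contains it as a fallback in any case.
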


\begin{proof}
Consider a Schubert variety corresponding to the cycle $\Omega(0,4)$, representing all lines 
in $\mathbb{P}^4$ passing through a fixed point. Intersecting with $H_3$, we count how many 
lines of the form $\langle f,H(f)\rangle \subset \mathbb{P}^4$ pass through a generic point $g$. 
By Proposition~\ref{pencilquadriche}, there is a unique such line, namely $\langle g,H(g)\rangle$, 
so we obtain $\alpha=1$.

Since the degree of $H_3$ is known to be $5$, the relation between the multidegree and the degree implies
\[
\deg(H_3) = \alpha \cdot 1 + \beta \cdot 2 = 5.
\]
From this equation we immediately deduce $\beta=2$.
\end{proof}

Since $H_3$ is $SL(2)$-invariant, it contains orbits of pencils. In particular, the following result holds.

\begin{teo}\label{teoSquartiche}
The variety $H_3$, defined in \ref{defX}, coincides with the closure of the orbit of the pencil 
$\langle x^4+y^4,x^2y^2\rangle$ under the action of $SL(2)$, that is
\[
    H_3=\overline{O(\langle x^4+y^4,x^2y^2\rangle)}.
\]
Moreover, $H_3$ consists of three orbits of pencils and is smooth.  
\end{teo}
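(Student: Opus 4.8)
The plan is to prove Theorem~\ref{teoSquartiche} in three stages: first identify $H_3$ with the orbit closure, then enumerate the orbits it contains, then verify smoothness.

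\emph{Step 1: $H_3 = \overline{O(\langle x^4+y^4,\,x^2y^2\rangle)}$.} This was essentially already obtained after the restatement of Definition~\ref{defX}: every Hesse pencil $\langle f, H(f)\rangle$ with $f$ having four simple roots is $SL(2)$-equivalent to some $\langle x^4+6\lambda x^2 y^2+y^4,\ *\rangle$, and a direct computation shows all of these lie in the single orbit of $\langle x^4+y^4, x^2y^2\rangle$ (the pencil spanned by the two members at $\lambda=0$ and $\lambda=\infty$ is independent of $\lambda$). Hence the set whose closure defines $H_3$ is contained in $O(\langle x^4+y^4, x^2y^2\rangle)$, and conversely it contains $\langle x^4+y^4, x^2y^2\rangle$ itself (which is a genuine Hesse pencil, $f=x^4+y^4$ being smooth), so the two sets have the same closure. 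I would just recall this argument and cite Proposition~\ref{pencilquadriche}.

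\emph{Step 2: the three orbits.} Since $H_3$ is irreducible of dimension $3$, $SL(2)$-invariant, and equals an orbit closure, I would describe its boundary $H_3 \setminus O(\langle x^4+y^4, x^2 y^2\rangle)$, a proper closed invariant subset of dimension $\le 2$. The natural approach is to degenerate: take the family $\langle x^4+6\lambda x^2y^2+y^4,\ x^4+y^4 - \tfrac{1-3\lambda^2}{2\lambda}\cdot(\text{scaled } x^2y^2)\rangle$ and let $\lambda \to 0$ or $\lambda \to \infty$, reading off the limiting pencil in $G(1,4)$. At $\lambda \to \infty$ one picks up the pencil $\langle x^2 y^2,\ *\rangle$; tracking the second generator carefully (using the Hessian formula, $H(x^2y^2)=x^2y^2$ up to scalar, so one must use a different spanning pair) gives a new limit pencil, whose $SL(2)$-orbit has dimension $2$ — e.g. something like $\langle x^3 y,\ x^4 \rangle$ or $\langle x^2y^2,\ x^3y\rangle$. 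Iterating the degeneration on this $2$-dimensional orbit should produce a single further orbit (the closure of the first boundary orbit contains it), and a dimension count — $3$-dimensional generic orbit, one $2$-dimensional boundary orbit, and its own boundary of dimension $\le 1$ which must be the final orbit — shows there are exactly three. I would make this precise by exhibiting explicit representatives of all three and checking their orbit dimensions via stabilizers (the generic pencil has finite stabilizer modulo scalars up to the $2:1$ symmetry, the others have $1$-dimensional stabilizer, etc.), consistent with the analogue of the quartic orbit table on p.~29.

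\emph{Step 3: smoothness.} Since $H_3 = G(1,4) \cap \Lambda$ for a codimension-$3$ linear subspace $\Lambda = \mathbb{P}(\mathrm{Sym}^6\mathbb{C}^2)$ by Proposition~\ref{propA-F}, smoothness is a statement about this linear section. The cleanest route is to use that $H_3$ is a (known) Fano threefold of degree $5$ — by the remark following Proposition~\ref{propA-F} and the classification in \cite{Mukai}, the degree-$5$ index-$2$ Fano threefold $V_5$ is exactly the smooth linear section of $G(1,4)$ of codimension $3$, and all such sections are projectively equivalent and smooth. Alternatively, and more self-containedly, since $H_3$ is a single orbit closure with irreducible boundary of strictly smaller dimension, I would check that $G(1,4) \cap \Lambda$ is transverse: the tangent space to $G(1,4)$ at any point meets $\Lambda$ in the expected codimension. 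It suffices to check this at one point of each orbit (invariance propagates it over the orbit), so the computation reduces to at most three Jacobian-rank checks on explicit representatives.

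\emph{Main obstacle.} The genuinely non-formal part is Step~2: correctly computing the limits of the pencils as $\lambda \to 0, \infty$ inside the Plücker embedding. The subtlety is that the two ``obvious'' generators of $\langle x^4+6\lambda x^2y^2+y^4,\ H(\cdot)\rangle$ may become proportional in the limit (since $Sq$ is where $f$ and $H(f)$ collide), so one must rescale the spanning pair — pass to a different basis of the pencil that stays generically independent — before taking the limit, exactly as in the proof of Proposition~\ref{pencilquadriche}. Getting the right boundary representatives, and then confirming there are no further orbits, is where the care is needed; everything else is bookkeeping or citation.
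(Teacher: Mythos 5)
Your overall architecture (identify $H_3$ as an orbit closure, enumerate the boundary orbits, check smoothness pointwise) matches the paper's. The paper, however, simply delegates the orbit classification to \cite{Aluffi-Faber} and then verifies smoothness by computing the rank of the $8\times 10$ Jacobian of the defining equations at one representative of each of the three orbits $\langle x^4+y^4,x^2y^2\rangle$, $\langle x^4,x^2y^2\rangle$, $\langle x^4,x^3y\rangle$, finding rank $6=\mathrm{codim}\,H_3$ in every case. Your Step 1 is fine, and the second alternative in your Step 3 (one transversality/Jacobian check per orbit, propagated by invariance) is exactly what the paper does.

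The genuine gap is in Step 2. The family you propose to degenerate, $\langle x^4+6\lambda x^2y^2+y^4,\ H(\cdot)\rangle$ with $\lambda\to 0$ or $\infty$, is \emph{constant} as a point of $G(1,4)$: for every $\lambda$ the quartic $x^4+6\lambda x^2y^2+y^4$ and its Hessian span the same pencil $\langle x^4+y^4,\,x^2y^2\rangle$ --- this is precisely the content of Proposition \ref{pencilquadriche}. Varying $\lambda$ therefore never leaves the open orbit, no matter how you rescale the spanning pair, and produces no boundary points at all. To reach the boundary you must degenerate the quartic $f$ out of the locus of four simple roots (equivalently, act by a degenerating one-parameter family in $GL(2)$), e.g.\ $f_\epsilon=x^4+\epsilon y^4$, whose Hessian is proportional to $\epsilon x^2y^2$, so that the pencils $\langle f_\epsilon,H(f_\epsilon)\rangle$ tend to $\langle x^4,x^2y^2\rangle$, and then a further degeneration into the locus with a triple root for the deepest stratum. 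Relatedly, your representatives are not pinned down: $\langle x^2y^2,x^3y\rangle$ is not one of the boundary orbits (the correct ones are $\langle x^4,x^2y^2\rangle$ of dimension $2$ and $\langle x^4,x^3y\rangle$ of dimension $1$), and without the correct list neither the count of exactly three orbits nor the orbit-by-orbit smoothness check can be completed.
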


\begin{proof}
For the proof of this theorem we refer to the article \cite{Aluffi-Faber}. Here, we simply 
report the correspondence between representatives of the $SL(2)$-orbits in the space of binary 
sextics and in the space of quartic pencils, together with the rank of the $8\times 10$ Jacobian 
matrix of the defining equations at points in each orbit. At smooth points, the Jacobian has rank 
equal to the codimension of $H_3$, namely $6$, so singularities would occur only where the rank drops.

The results are summarized in the following table:

\begin{table}[H]
\centering
\renewcommand{\arraystretch}{1.5}
\begin{tabular}{c|c|c|c}
    \textbf{Representative $\langle f,g\rangle $} & \textbf{Representative sextic}&
    \textbf{dim($O(\langle f,g\rangle )$)}& \textbf{Rank($J$)}\\
    \hline
      $\langle x^4+y^4,x^2y^2\rangle$ &$x^5y-xy^5$  & $3$ & $6$\\ \hline
      $\langle x^4,x^2y^2\rangle $ &$xy^5$& $2$ & $6$\\ \hline 
      $\langle x^4,x^3y\rangle$ &$x^6$& $1$ & $6$\\ \hline
\end{tabular}
\caption{Classification of the $SL(2)$-orbits in $H_3$.}
\label{3orbitquartics}

\end{table}
Since the Jacobian has full rank at all points, $H_3$ is smooth.
\end{proof}

\section{Plane Cubics}
We denote by $[x,y,z]$ the homogeneous coordinate of $\mathbb{P}^2$. A cubic 
$C_f\subset \mathbb{P}^2$ is defined by an equation of the form $f=0$ with  
\begin{equation*}
    f=a_0 x^3+3a_1 x^2y+3a_2 x^2z+3a_3xy^2+6a_4 xyz+3a_5 xz^2+a_6y^3+3a_7y^2z+3a_8yz^2+a_9z^3.
\end{equation*}
We can identify $C_f$ with $f$, and $f$ with the ten homogeneous coordinates given 
by its coefficients $a_i$. Therefore, we can think of a cubic curve as a point in $\mathbb{P}^9$. 
The Hessian of a plane cubic, named $H(f)$, is still a plane cubic whose coefficients 
are polynomial of degree $3$ in the $a_i$. \\
A smooth cubic curve contains nine inflection points, namely the points lying in the
intersection between the cubic itself and its Hessian. These points are arranged in a highly 
structured configuration (see \cite[Vol.~3, Ch.~3]{Enriquescubiche} for details). Any line passing 
through two of them always contains a third one; in total, there are twelve such lines, which are 
classically known as the Maclaurin lines. Such a configuration of $9$ points and $12$ lines in $\mathbb{P}^2$ 
is called the $Hesse \ \ configuration$. Moreover, if a smooth cubic curve $f$ passes through nine 
points arranged in the Hesse configuration, then these points are inflection points of $f$.
\begin{figure}[H]
\centering
    \begin{tikzpicture}

        \draw[domain=-3:3,smooth,variable=\x,black,thin] plot({\x},{0});
        \draw[domain=-3:3,smooth,variable=\x,black,thin] plot({\x},{-2});
        \draw[domain=-3:3,smooth,variable=\x,black,thin] plot({\x},{2}); 
        \draw[domain=-3:3,smooth,variable=\y,black,thin] plot({-2},{\y});  
        \draw[domain=-3:3,smooth,variable=\y,black,thin] plot({0},{\y});
        \draw[domain=-3:3,smooth,variable=\y,black,thin] plot({2},{\y});
       \draw[domain=-3:3,smooth,variable=\x,black,thin] plot({\x},{\x});  
       \draw[domain=-3:3,smooth,variable=\x,black,thin] plot({\x},{-\x});
       \fill[black] (-2,-2) circle (2pt) node[above left] {$p_1$};
        \fill[black] (-2,0) circle (2pt) node[above left] {$p_4$};
        \fill[black] (-2,2) circle (2pt) node[above left] {$p_7$};
       \fill[black] (0,-2) circle (2pt) node[above left] {$p_2$};
        \fill[black] (0,0) circle (2pt) node[above left] {$p_5$};
        \fill[black] (0,2) circle (2pt) node[above left] {$p_8$};
        \fill[black] (2,-2) circle (2pt) node[above left] {$p_3$};
        \fill[black] (2,0) circle (2pt) node[above left] {$p_6$};
        \fill[black] (2,2) circle (2pt) node[above left] {$p_9$};    

        %\draw[orange] (0,2)--(2,0)--(-2,-2);
        \draw[orange, line width=1pt] (-0.5,2.5)--(2.5,-0.5) (-2.5,-1.5)--(-1.5,-2.5);
        \draw[pink, line width=1pt] (0.5,2.5)--(-2.5,-0.5) (1.5,-2.5)--(2.5,-1.5);
        \draw[purple, line width=1pt] (-0.5,-2.5)--(2.5,0.5) (-2.5,1.5)--(-1.5,2.5);
        \draw[green, line width=1pt] (0.5,-2.5)--(-2.5,0.5) (1.5,2.5)--(2.5,1.5);
     \end{tikzpicture}
     \caption{This figure represents nine points in Hesse configuration and the twelve lines 
     that characterize them. The eight black lines are clearly visible, while the remaining four 
     could not be explicitly drawn. Instead, four different colours were used to indicate the 
     three points on each of these lines. }
     \label{hesseconfiguration}
\end{figure}
We now turn to a result that will play a crucial role in the proof of Proposition \ref{multigradoScubiche}.
\seiconfigurazioni*

    \begin{proof}
Since any four points in general position in $\mathbb{P}^2$ can always be mapped,  via a projectivity, to 
\begin{equation*}
    (1:0:0)\ \ \ \ \ \ (0:1:0)\ \ \ \ \ \ (0:0:1)\ \ \ \ \ \ (1:1:1)
\end{equation*}
we may choose these four points as our fixed points.\\   
From now on, we will keep the convention of displaying the Hesse configuration as in Figure~\ref{hesseconfiguration}.
We first show that there are three possible choices for the fifth point. 
Then we show that, for each such choice, there are two possible Hesse configurations.
\revised{By symmetry, it suffices to treat only one of the three cases, since the other 
two are obtained by permuting the coordinates and yield the same number of solutions.}
\begin{itemize}
\item  \textbf{Choice of the fifth point}\\
We know that, in a Hesse configuration, any line passing through two points always contains a 
\revised{third point} of the configuration. The four fixed points determine six lines, so two of these lines 
must necessarily intersect at a fifth point of the configuration. Indeed, if this were not the case, 
we would have $4$ fixed points plus $6$ additional ones from the six lines, which would be too many 
to fit in the configuration. This gives us \revised{$\frac{1}{2}\binom{4}{2}=3$} ways to choose the fifth point, which correspond to:
\begin{figure}[H]
\centering
\begin{minipage}{0.45\textwidth}
    \begin{tikzpicture}
        \fill[blue] (-1,0) circle (1pt) node[above left] {$(0:0:1)$};
       \fill[blue] (0,-1) circle (1pt) node[below] {$(1:1:1)$};
        \fill[blue] (0,1) circle (1pt) node[above] {$(1:0:0)$};
        \fill[blue] (1,0) circle (1pt) node[above right] {$(0:1:0)$};
        \fill[black] (0,0) circle (1pt) node[above] {$(0:1:1)$};  
     \end{tikzpicture}
    \caption{$\{y=z\}\cap\{x=0\}= (0:1:1)$}
     \label{figuradue}
     \end{minipage} \hspace{0.5cm}
     \begin{minipage}{0.45\textwidth}
     \centering
     \begin{tikzpicture}
        \fill[blue] (-1,0) circle (1pt) node[above left] {$(0:0:1)$};
       \fill[blue] (0,-1) circle (1pt) node[below] {$(1:1:1)$};
        \fill[blue] (0,1) circle (1pt) node[above] {$(0:1:0)$};
        \fill[blue] (1,0) circle (1pt) node[above right] {$(1:0:0)$};
        \fill[black] (0,0) circle (1pt) node[above] {$(1:0:1)$};  
     \end{tikzpicture}      
     \caption{$\{x=z\}\cap\{y=0\}= (1:0:1)$}
     \end{minipage}
\end{figure}

\begin{figure}[H]
\centering
\begin{tikzpicture}
        \fill[blue] (-1,0) circle (1pt) node[above left] {$(0:1:0)$};
       \fill[blue] (0,-1) circle (1pt) node[below] {$(1:1:1)$};
        \fill[blue] (0,1) circle (1pt) node[above] {$(0:0:1)$};
        \fill[blue] (1,0) circle (1pt) node[above right] {$(1:0:0)$};
        \fill[black] (0,0) circle (1pt) node[above] {$(1:1:0)$};  
     \end{tikzpicture}
     \caption{$\{x=y\}\cap\{z=0\}=(1:1:0)$}
     \end{figure}
%We will show that for each of these cases, we obtain two Hesse configurations.
%\revised{By symmetry, it sufficies to treat the first case; the other two cases are obtained
%by permuting the coordinates and give the same number of solutions.} \\
\item \textbf{Completation of the configuration (case of Figure~\ref{figuradue})}\\
Consider the line through $(1:0:0)$ and $(0:0:1)$, that is, $y=0$. On this line, there must be 
another point of the configuration, which we \revised{denote} as $(1:0:\lambda)$. This point generates three 
other lines of the configuration with the points $(0:1:0)$, $(0:1:1)$ and $(1:1:1)$, which are 
respectively $z=\lambda x$,  $z=y+\lambda x$ and $z=\lambda x +y(1-\lambda)$. On each of these lines, 
we find another point of the configuration. 

\begin{figure}[H]
\centering
    \begin{tikzpicture}
        \draw[domain=-3:3,smooth,variable=\y,black,thin] plot({2},{\y}); 
        \draw[domain=-3:3,smooth,variable=\x,black,thin] plot({\x},{-2});
       \draw[domain=-3:3,smooth,variable=\x,black,thin] plot({\x},{-\x});
       \fill[black] (-2,-2) circle (2pt) node[above left] {r};
        \fill[blue] (-2,0) circle (2pt) node[above left] {$(0:0:1)$};
        \fill[black] (-2,2) circle (2pt) node[above right] {q};
       \fill[blue] (0,-2) circle (2pt) node[below left] {$(1:1:1)$};
        \fill[black] (0,0) circle (2pt) node[above left] {$(0:1:1)$};
        \fill[blue] (0,2) circle (2pt) node[above left] {$(1:0:0)$};
        \fill[black] (2,-2) circle (2pt) node[below left] {$(1:0:\lambda)$};
        \fill[blue] (2,0) circle (2pt) node[above left] {$(0:1:0)$};
        \fill[black] (2,2) circle (2pt) node[above left] {p};    
        
        \draw[red] (0.5,2.5)--(-2.5,-0.5) (2.5,-1.5)--(1.5,-2.5);
     \end{tikzpicture}
     \caption{The line in red represents $y=0$, while the other three black lines
     are given by $z=\lambda x$,  $z=y+\lambda x$ and $z=\lambda x +y(1-\lambda)$.}
\end{figure}
Let $p$ denote the points of the configuration that lies on the line $z=\lambda x$. As we know, 
this point must lie on exactly three other lines of the configuration. However, we have four points 
available $(1:0:0), \ (0:1:1), \ (0:0:1), \ (1:1:1)$ that, a priori, would determine four additional lines. 
Therefore, it follows that $p$ must necessary be collinear with the line through two of these four points. 
Excluding the lines that already intersect the line $z=\lambda x$ at another point and those that already contain 
three points of the configuration, the only possibility is that $p$ lies on the line through $(1:1:1)$ and $(0:0:1)$, 
that is $x=y$. We obtain
\begin{equation*}
    p\in \{z=\lambda x\}\cap \{x=y\}\ \ \implies\ \ \ p=(1:1:\lambda)
\end{equation*}
Let $q$ denote the point on $z=y+\lambda x$. Using a similar reasoning, it follows that $q$ must also lie on 
the line through $(1:1:1)$ and $(0:1:0)$, that is 
\begin{equation*}
    q\in \{z=y+\lambda x\} \cap \{x=z\}\ \ \implies\ \ \ q=(1:1-\lambda:1)
\end{equation*}
Finally, let $r$ denote the third point on $z=\lambda x+y(1-\lambda)$. We obtain
\begin{equation*}
    r\in \{z=\lambda x+y(1-\lambda)\}
\cap \{z=0\}\ \ \ \ \implies \ \ \ \ r=(\lambda-1:\lambda:0)
\end{equation*}
The line through $p$ and $q$ must contain another point. The lines of the configuration for $(1:0:0)$ must be $4$. 
From the previous calculation, we have already considered $3$ of them, namely the lines for the points:
\[\begin{aligned}
    &(1:0:0) \ \ \ \ &(0:1:1) \ \ \ \ &(1:1:1)\\
    &(1:0:0) \ \ \ \ &(0:0:1) \ \ \ \ &(1:0:\lambda)\\
    &(1:0:0) \ \ \ \ &(0:1:0) \ \ \ \ &(\lambda-1:\lambda:0) 
\end{aligned}\]
It follows that the line through $p$ and the one through $q$ must coincide, that is, the three 
points $p$, $(1:0:0)$ and $q$ must be collinear. 
\begin{equation*}
    0=\left |\begin{matrix}
        1& 0&0\\
        1&1&\lambda\\
        1&1-\lambda&1
    \end{matrix}\right |=1+\lambda(\lambda-1)=\lambda^2-\lambda+1
\end{equation*}
The solutions are $\lambda_1=-\epsilon$ and $\lambda_2=\epsilon+1$ where $\epsilon$ is a primitive 
third root of unity. These two values give us the two possible configurations. The only thing left 
to verify is that $r$ also lies on the line through $q$ and $(0:0:1)$, and on the line through $p$ 
and $(0:1:1)$. \\
For $\lambda=-\epsilon$
\begin{equation*}
   \left | \begin{matrix}
        1&1+\epsilon&1\\
        0&0&1\\
        -\epsilon-1& -\epsilon &0
    \end{matrix}\right |= \epsilon-(\epsilon+1)^2=-(\epsilon^2+\epsilon+1)=0\ \ \ \ \ \ \ \ \ \ \ \ \ \ \ \ 
\end{equation*}
\begin{equation*}
    \left | \begin{matrix}
    1&1&-\epsilon\\
    0 &1&1\\
    -\epsilon-1&-\epsilon&0
    \end{matrix}\right |= -\epsilon(\epsilon+1)-(-\epsilon+(\epsilon+1))=-\epsilon^2-\epsilon-1=0
\end{equation*}
For $\lambda=1+\epsilon$
\begin{equation*}
   \left | \begin{matrix}
        1&-\epsilon&1\\
        0&0&1\\
        \epsilon& 1+\epsilon &0
    \end{matrix}\right |= -(1+\epsilon+\epsilon^2)=0\ \ \ \ \ \ \ \  \ \ \ \ \ \ \ \ \ \ \ \ \ \ \ \ \  \ \ \ \ \ \ \ \ \ \ \  \end{equation*}
\begin{equation*}
    \left | \begin{matrix}
    1&1&1+\epsilon\\
    0 &1&1\\
    \epsilon&1+\epsilon&0
    \end{matrix}\right |= -\epsilon(1+\epsilon)-(1+\epsilon-\epsilon)=-\epsilon^2-\epsilon-1=0
\end{equation*}\\

We have thus found the two configurations corresponding to Figure~\ref{figuradue}.\\
\begin{figure}[H]
\centering
\begin{minipage}{0.45\textwidth}
    \begin{tikzpicture}
        \fill[blue] (-1,0) circle (1pt) node[above left] {$(0:0:1)$};
       \fill[blue] (0,-1) circle (1pt) node[below] {$(1:1:1)$};
        \fill[blue] (0,1) circle (1pt) node[above] {$(1:0:0)$};
        \fill[blue] (1,0) circle (1pt) node[above right] {$(0:1:0)$};
        \fill[black] (0,0) circle (1pt) node[above] {$(0:1:1)$};  
        \fill[black] (-1,-1) circle (1pt) node[below left]{$(1:\epsilon+1:0)$};
        \fill[black] (1,-1) circle (1pt) node[below right] {$(1:0:-\epsilon)$};  
        \fill[black] (1,1) circle (1pt) node[above right]{$(1:1:-\epsilon)$}  ;
        \fill[black] (-1,1) circle (1pt) node [above left]{$(1:\epsilon+1:1)$};
        \end{tikzpicture}

     \end{minipage} \hspace{1cm}
     \begin{minipage}{0.45\textwidth}
     \centering
     \begin{tikzpicture}
        \fill[blue] (-1,0) circle (1pt) node[above left] {$(0:0:1)$};
       \fill[blue] (0,-1) circle (1pt) node[below] {$(1:1:1)$};
        \fill[blue] (0,1) circle (1pt) node[above] {$(1:0:0)$};
        \fill[blue] (1,0) circle (1pt) node[above right] {$(0:1:0)$};
        \fill[black] (0,0) circle (1pt) node[above] {$(0:1:1)$};  
        \fill[black] (-1,-1) circle (1pt) node[below left]{$(1:-\epsilon:0)$};
        \fill[black] (1,-1) circle (1pt) node[below right] {$(1:0:\epsilon+1)$};  
        \fill[black] (1,1) circle (1pt) node[above right]{$(1:1:1+\epsilon)$}  ;
        \fill[black] (-1,1) circle (1pt) node [above left]{$(1:-\epsilon:1)$};     
    \end{tikzpicture}      

     \end{minipage}
\end{figure}
\end{itemize}
\end{proof}
We denote by $Z$ the variety consisting of cubics whose Hessian \revised{map} is not defined, 
namely the cones (see \cite{Coni}). 
This variety has dimension $5$ and is contained in the variety of triangles, denoted by $Tr$. 
Here, \emph{triangles} are cubics that decompose as the union of three lines. 
The variety $Tr$ can be defined as the closure of the set of cubics that coincide with their 
Hessian and has dimension $6$.
 
\begin{prop}\label{triangoliper6punti}
    Given six general points in $\mathbb{P}^2$, meaning that no three of them are collinear, 
    there exist exactly $15$ triangles \revised{each passing through all these points.} 
\end{prop}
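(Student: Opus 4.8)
The plan is to set up a bijection between triangles passing through the six points and perfect matchings of the six points into three unordered pairs. First I would observe that a triangle, being a union of three lines (a priori counted with multiplicity), passes through all six points precisely when each of the points lies on at least one of its component lines. The genericity hypothesis---no three of the six points collinear---implies that any line of $\mathbb{P}^2$ contains at most two of them; moreover no one of the six points can lie on two component lines at once, since in that case the three lines would cover at most $2+1+2=5<6$ of the points. Hence three lines, each carrying at most two of the six points and pairwise sharing none of them, can account for all six only if each line carries exactly two, and the three resulting pairs are disjoint: this is a perfect matching. The same counting rules out non-reduced triangles, since a triangle of the form $\ell^2\ell'$ covers at most $2+2=4$ of the points and $\ell^3$ at most $2$; so every triangle through the six points is reduced and yields a well-defined matching.

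Conversely, I would check that a perfect matching $\{p_{i_1},p_{i_2}\},\{p_{i_3},p_{i_4}\},\{p_{i_5},p_{i_6}\}$ determines three lines $\ell_1,\ell_2,\ell_3$, where $\ell_k$ is the unique line through the $k$-th pair, that these lines are pairwise distinct (if two coincided, that line would carry four of the six points, against the hypothesis), and that $\ell_1\cup\ell_2\cup\ell_3$ is a triangle through all six points. The two assignments are mutually inverse: from a triangle one recovers the matching by recording, for each point, the unique component line containing it, and from a matching one rebuilds the triangle as the union of the spanned lines; in particular distinct matchings give distinct triangles, since the matching is recovered from the (unordered) triple of component lines.

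It then remains to count the perfect matchings of a six-element set, which is $\tfrac{6!}{2^3\,3!}=\binom{6}{2}\binom{4}{2}\binom{2}{2}/3!=5\cdot 3\cdot 1=15$, and this gives exactly $15$ triangles through the six points. There is no real obstacle here beyond being careful with the degenerate configurations: non-reduced triangles are excluded by the covering count above, and although the three lines coming from a given matching may happen to be concurrent, such a union of three distinct lines is still a legitimate (degenerate) triangle in the sense of the stated definition, so it is correctly included; the value $15$ does not depend on whether any such coincidence occurs.
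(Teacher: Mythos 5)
Your proof is correct and follows essentially the same route as the paper's: both count the $15$ ways of partitioning the six points into three pairs, each pair spanning a side of the triangle ($5\cdot 3\cdot 1=15$). Your write-up is in fact more careful than the paper's one-line count, since you explicitly verify that every triangle through the six points must be reduced, that each component line carries exactly two of the points, and that the correspondence between such triangles and perfect matchings is a bijection.
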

\begin{proof}
    \begin{figure}[H]
\centering
    \begin{tikzpicture}

       \fill[black] (-2,-1) circle (2pt) node[above left] {$p_2$};
        \fill[black] (0,-1.5) circle (2pt) node[above left] {$p_4$};
        \fill[black] (2,-1) circle (2pt) node[above left] {$p_6$};
       \fill[black] (-2,0) circle (2pt) node[above left] {$p_1$};
        \fill[black] (0,0.5) circle (2pt) node[above left] {$p_3$};
        \fill[black] (2,0) circle (2pt) node[above left] {$p_5$};

     \end{tikzpicture}
\end{figure}
% Let us fix one side of the triangles as the line $p_1p_2$. We then need to construct two more 
% lines passing through the remaining four points, and we have $3$ possible choices for this. 
% Since $p_1$ can initially be paired with any of the $5$ points, we obtain a total of $3\cdot 5=15$ 
% triangles. 
\revised{Triangles of this type are in bijection with partitions of the set
\(\{p_1,\dots,p_6\}\) into three unordered pairs, since each pair of points
determines one of the three lines defining the triangle. Hence, the number of 
such triangles equals the number of partitions of a
6-element set into three unordered pairs, namely
$\frac{6!}{(2!)^3\,3!}=15$.}
\end{proof}

The special linear group $SL(3)$ acts on the space of plane cubics, and the Hessian map is equivariant 
with respect to this action \revised{(see \cite[Prop.~4.4.2]{Invariantstrumfels})}. Smooth cubic curves lie in infinitely many orbits. Besides these, there are 
eight additional orbits containing singular cubics (Table $2.2$ in \cite{OrbitCubicsSL3}). \\
From Theorem $5.7$ in \cite{Hessianmap} we know that the Hessian map for plane cubics is generically [3:1], 
meaning that a general smooth cubic is the Hessian of three other cubics. 
In the following table, 
we describe the locus $H^{-1}(f)$ for all types of cubics, choosing one representative in each orbit;
\revised{this follows from a direct computation}. 
\begin{table}[H]
\begin{adjustwidth}{-2cm}{-2cm}
\centering
\renewcommand{\arraystretch}{1.5}
    \begin{tabular}{c|c|c}
    \text{\textbf{Cubic $f$}} & \textbf{dim$(\overline{H^{-1}(f)})$}& \textbf{Description of $g\in 
    H^{-1}(f)$ }\\\hline
      $x^3$   &4& $a_0x^3+3a_1x^2y+3a_2x^2z+3a_3xy^2+6a_4xyz+3a_5xz^2$ \\ 
      & & $with \ \ \ a_4^2=a_3a_5$\ \ \ $and \ \ \ a_2^2a_3-2a_1a_2a_4+a_1^2a_5\neq 0$\\ \hline 
       $xy(x+y)$ &$empty$ & \\ \hline
       $x^2y$& 3&$a_0x^3+3a_1x^2y+3a_2x^2z+a_6y^3\ \ with \ \ a_2,a_6\neq 0$\\ \hline
       $x(x^2+yz)$&0& $x^3-3xyz$\\ \hline
       $y(x^2+yz)$ & $empty$&\\ \hline
       $y^2z-x^3-x^2z$ &0& $-2x^3-3x^2z+3xy^2+3y^2z$\\ \hline
       $y^2z-x^3$ & $empty$&\\ \hline
       $x^3+y^3+z^3-3txyz$ \text{ with } $t^3\neq 1$ & 0 & $x^3+y^3+z^3-3\lambda xyz\ \ with\ \ 4-\lambda^3=3\lambda^2t$ \\ \hline
       $xyz$ &2& $a_0x^3+a_6y^3+a_9z^3$\ \ with  $a_0,a_6,a_9\neq 0$ \  \ \textit{and}\ \ \ $xyz$\\ 
    \hline 
    \end{tabular}
    \caption{Description of the preimage of the Hessian map.}
     \label{tab:hessianspace}
\end{adjustwidth}

\smallskip
\noindent
Geometrically, the cubics in $H^{-1}(x^3)$ are unions of a nonsingular conic with its 
tangent line $x=0$; those in $H^{-1}(x^2y)$ are cuspidal cubics with cusp at $[0:0:1]$; 
and those in $H^{-1}(xyz)$ consist of smooth cubics of the form $a_0x^3+a_6y^3+a_9z^3$ 
together with the reducible cubic $xyz$.
\end{table}

\subsection{The Hesse Pencil Variety of Plane Cubics}
\begin{defn}[The Hesse Pencil]
    Let $f$ be a smooth cubic. The pencil generated by $f$ and its Hessian $H(f)$, that is 
    \begin{equation*}
        \langle f,H(f)\rangle =\lambda f + \mu H(f)  \ \ \ \ \ \ \ \ \lambda, \mu \in \mathbb{C},
    \end{equation*} is called  $ Hesse\ pencil$. 
\end{defn}

Observe that every cubic in such a pencil passes through the nine inflection points of $f$. 
Since a pencil of cubics is determined by eight points, it follows that each Hesse pencil 
is characterized by passing through nine points in Hesse configuration, which are therefore 
inflection points for all cubics in the pencil. These nine points will be referred to as 
$the\ base \ points$ of the pencil. \\
Hesse’s Theorem (see \cite[Vol.~3, Ch.~3]{Enriquescubiche}) states that such pencils of 
cubics contain the Hessian of every cubic in the pencil; moreover, each smooth cubic in 
the pencil arises as the Hessian of three other cubics belonging to the same Hesse pencil.
\begin{prop}\label{Hessepenciltriangoli}
    Every cubic in the variety of triangles $Tr$ belongs to a $2$-dimensional family of Hesse pencils. 
\end{prop}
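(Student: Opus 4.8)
The plan is to realise the family of Hesse pencils through a given triangle as a fibre of an incidence correspondence and to compute its dimension. Let
\[
I \;=\; \bigl\{\,(T,L)\in Tr\times H_8 \ :\ T\in L\,\bigr\},
\]
with projections $\pi_1\colon I\to Tr$ and $\pi_2\colon I\to H_8$; since the incidence condition $T\in L$ is closed, $I$ is projective and $\pi_1,\pi_2$ are proper. The proposition is precisely the statement that every fibre of $\pi_1$ has dimension at least $2$. Write $I_0\subseteq I$ for the closure of $\pi_2^{-1}(U)$, where $U\subseteq H_8$ is the dense orbit of genuine Hesse pencils.

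First I would compute $\dim I_0=8$. A point of $U$ is an honest Hesse pencil $\langle f,H(f)\rangle$, and by the classical structure of such a pencil it has exactly four triangle members, namely the four triangles whose twelve sides are the twelve Maclaurin lines through the nine base points; hence $\pi_2$ restricted to $I_0$ is generically finite onto $H_8$, and $\dim I_0=\dim H_8=8$. Next, $\pi_1$ restricted to $I_0$ is dominant: the coordinate triangle $xyz$ is a member of $\langle x^3+y^3+z^3,\,xyz\rangle\in U$, and since three lines in general position can be carried to the three coordinate lines by a projectivity, $O(xyz)$ is dense in $Tr$; being proper, $\pi_1$ is then surjective onto $Tr$. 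As $\dim Tr=6$, the general fibre of $\pi_1|_{I_0}$ has dimension $8-6=2$, and by $SL(3)$-equivariance this holds over every point of the dense orbit $O(xyz)$. Concretely, the diagonal torus $\mathrm{diag}(a,b,c)$ fixes $xyz$ up to a scalar and carries $\langle x^3+y^3+z^3,xyz\rangle$ to $\langle \alpha x^3+\beta y^3+\gamma z^3,\,xyz\rangle$, which exhibits a $2$-parameter family of Hesse pencils through $xyz$.

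Finally I would pass from the dense orbit to all of $Tr$ using upper semicontinuity of fibre dimension: the locus of points $x\in I_0$ at which the fibre of $\pi_1|_{I_0}$ through $x$ has local dimension at least $2$ is closed in $I_0$ and contains the dense subset $\pi_1^{-1}(O(xyz))$, so it is all of $I_0$; since $\pi_1(I_0)$ is closed (properness) and contains $O(xyz)$, it equals $Tr$. Hence $\dim\pi_1^{-1}(T)\ge 2$ for every $T\in Tr$, which is the claim. The step I expect to be the main obstacle is the dimension count $\dim I_0=8$: it rests on the fact that a general Hesse pencil contains only finitely many triangles, i.e. on Hesse's theorem and the geometry of the nine base points, and one must also keep in mind, when analysing $I$ globally, that $\pi_2$ is \emph{not} finite everywhere, since certain degenerate pencils of $H_8$ lie entirely in $Tr$ (for instance $\langle x^2y,\,2x^3+y^3\rangle$, a limit of Hesse pencils all of whose members are binary cubics in $x,y$, hence triangles); such pencils form only a proper closed subvariety of $H_8$, so they affect neither $\dim I_0$ nor the conclusion. (In fact the family of Hesse pencils through $T$ is exactly $2$-dimensional for a general triangle; for the most degenerate triangles, such as a triple line, the construction above still furnishes a $2$-dimensional family, possibly sitting inside a larger one.)
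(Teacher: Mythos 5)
Your argument is correct, but it is genuinely different from the one in the paper. The paper's proof is a one-liner via the Hessian map: for a triangle $T$, the fibre $H^{-1}(T)$ is $2$-dimensional (Table 3.1, e.g.\ $H^{-1}(xyz)=\{a_0x^3+a_6y^3+a_9z^3\}$), and each $f$ in this fibre yields the pencil $\langle f,T\rangle=\langle f,H(f)\rangle$ through $T$ --- essentially the explicit torus family you exhibit at the end, but obtained without any incidence correspondence. Your route instead rests on the dual count, namely that a general Hesse pencil contains exactly four triangles (its four singular members, whose sides are the twelve Maclaurin lines), which makes $\pi_2|_{I_0}$ generically finite and gives $\dim I_0=8$, whence general fibres of $\pi_1$ have dimension $8-6=2$. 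What your approach buys is uniformity: the paper's argument, read literally, only works on the dense orbit $O(xyz)\subset Tr$, since by its own Table 3.1 the fibre $H^{-1}(xy(x+y))$ is \emph{empty} and $H^{-1}(x^3)$, $H^{-1}(x^2y)$ have dimensions $4$ and $3$; your semicontinuity step extends the lower bound $\dim\pi_1^{-1}(T)\ge 2$ to every $T\in Tr$, which is in fact all that the application to $\Omega(3,6)$ requires. What the paper's approach buys is brevity and an explicit description of the family. One point in your write-up deserves a word of justification: you assert that $\pi_1^{-1}(O(xyz))$ is dense in $I_0$, which implicitly uses that every irreducible component of $I_0$ dominates $Tr$. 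This is true but not automatic; it follows either because $SL(3)$, being connected, preserves each component of $\pi_2^{-1}(U)$ (so each component surjects onto the orbit $U$ and hence meets $\pi_1^{-1}(O(xyz))$, the four triangles of a general Hesse pencil being genuine, non-concurrent triangles), or more directly because the Hessian group permutes the four triangles of a general member transitively, making $\pi_2^{-1}(U)$ a single orbit and $I_0$ irreducible. With that remark added, your proof is complete.
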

\begin{proof}
    Observe that for any cubic $T\in Tr$, the preimage under the Hessian map consists of all 
    cubics whose Hessian is $T$. Each such cubic, together with $T$, defines a Hesse pencil. 
    By the dimension count reported in Table~\ref{tab:hessianspace}, this preimage is $2$-dimensional. 
\end{proof}

\defXX*
From the characterization of orbits under the action of $SL(3)$, it follows that every smooth 
cubic lies in the orbit of a cubic of the form $x^3+y^3+z^3+6txyz$ with $t^3\neq 1$. 
The Hessian of such a cubic, when $t\neq 0$, is of the same form with $t'=\frac{-1-2t^3}{6t^2}$. 
Hence, the Hesse pencil generated by these cubics coincides with the pencil generated by 
the Fermat cubic and its Hessian, namely $\langle x^3+y^3+z^3,xyz\rangle$. Moreover, every 
Hesse pencil belongs to the orbit of this one under the action of $SL(3)$, extended to 
pencils of cubics, and $H_8$ coincides with the closure of the orbit of the pencil 
$\langle x^3+y^3+z^3,xyz\rangle$, that is
\begin{equation*}
    H_8=\overline{O(\langle x^3+y^3+z^3,xyz\rangle)}
\end{equation*}
\begin{prop}
    The Hesse Pencil Variety $H_8$ has dimension 8. 
\end{prop}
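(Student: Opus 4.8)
The plan is to realise $H_8$ as the closure of the image of the \emph{Hesse pencil map}
\[
h\colon \mathbb{P}^9 \dashrightarrow G(1,9),\qquad f\longmapsto \langle f,H(f)\rangle,
\]
which is regular on the dense open locus of cubics $f$ that are smooth (such an $f$ is not a triangle, so $H(f)\neq f$ and the pencil is genuinely a line), and which is $SL(3)$-equivariant because $H(C\cdot f)=C\cdot H(f)$. By definition, $H_8=\overline{\operatorname{im}h}$, and $H_8$ is irreducible, being the closure of a single orbit. So by the fibre-dimension theorem applied to the dominant morphism $h$ (restricted to its domain of definition), one has $\dim H_8 = 9-d$, where $d$ is the dimension of the general fibre of $h$. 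Thus the whole proof reduces to showing $d=1$.

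To compute $d$, I would fix a Hesse pencil $L=\langle f,H(f)\rangle$ in the image and identify $h^{-1}(L)$. On one hand, any $g$ with $h(g)=L$ lies on $L$, since $g\in\langle g,H(g)\rangle=L$. On the other hand, if $g$ is a smooth cubic lying on the line $L\subset\mathbb{P}^9$, then Hesse's Theorem (\cite[Vol.~3, Ch.~3]{Enriquescubiche}) guarantees that $H(g)$ again lies on $L$; as $g$ is smooth it is not a triangle, so $H(g)\neq g$, and therefore $L$ is the unique line through the two distinct points $g$ and $H(g)$, i.e.\ $\langle g,H(g)\rangle=L$. Hence $h^{-1}(L)$ is exactly the set of smooth members of $L$, a dense open subset of $L\cong\mathbb{P}^1$ (its complement being the finitely many singular members, the four triangles of the pencil). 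So every fibre over the image has dimension $1$, whence $d=1$ and $\dim H_8=9-1=8$.

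Two independent checks are worth recording. First, following the pattern used for $H_3$, one can form the incidence variety $P=\{(f,L)\in\mathbb{P}^9\times H_8 : f\in L\}$; the second projection makes $P$ the restriction to $H_8$ of the universal $\mathbb{P}^1$-bundle over $G(1,9)$, so $\dim P=\dim H_8+1$, while the first projection is dominant onto $\mathbb{P}^9$ with general fibre a single point, because the nine base points of a genuine Hesse pencil through a smooth cubic $f$ are forced to be the inflection points of $f$ and the pencil of cubics through those nine points is unique; this gives $\dim P=9$ and $\dim H_8=8$ again. Second, since $H_8=\overline{O(\langle x^3+y^3+z^3,xyz\rangle)}$ and $h$ is equivariant, $\dim H_8=\dim SL(3)-\dim\mathrm{Stab}$, and the stabiliser of a Hesse pencil permutes its nine base points, so its identity component fixes four of them in general position and is trivial, giving $\dim H_8=\dim SL(3)=8$. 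The one point that really needs care in all of these arguments is the fibre computation — ensuring that the smooth cubics mapping to a fixed Hesse pencil $L$ fill out all of $L$ rather than a proper subvariety — and this is exactly what Hesse's Theorem provides, so no genuine obstacle arises.
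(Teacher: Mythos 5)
Your proposal is correct, and your first ``independent check'' (the incidence variety $P$ with its two projections) is precisely the paper's own proof; your primary argument via the fibres of the Hesse pencil map $h$ is just the dual formulation of the same dimension count, resting on the same input (Hesse's Theorem guaranteeing that the Hessian of any smooth member of the pencil stays in the pencil). No substantive difference from the paper's approach.
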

\begin{proof}
    Consider 
    \[
        P := \overline{\{(f,L) \in \mathbb{P}^9 \times H_8 \mid f \in L\}} \subset \mathbb{P}^9 \times G(1,9),
    \]
    with projections $p_1: P \to \mathbb{P}^9$ and $p_2: P \to H_8$.  

    For a generic $f\in \mathbb{P}^9$, the fiber $p_1^{-1}(f)$ is finite, consisting of the 
    unique pencil $L = \langle f,H(f)\rangle$, so $\dim(P)=9$.  
    For $L\in H_8$, the fiber $p_2^{-1}(L)$ is the line $L$ itself, hence $\dim(p_2^{-1}(L))=1$.  
    It follows that
    \[
        \dim(P) = \dim(H_8) + 1 = 9 \quad\Rightarrow\quad \dim(H_8) = 8.
    \]
\end{proof}

In $G(1,9)$ there are exactly five Schubert cycles of dimension $8$. These cycles \revised{generate} 
$H^{16}(G(1,9),\mathbb{Z})$. The degree of $H_8$ can be expressed as a linear combination 
of the degree of these cycles. Each of these cycles corresponds to a tableau consisting 
of $8$ squares contained within a $2\times 10$ matrix. In particular, these are: 
\[
\begin{array}{cc}
\text{$\lambda_1$=\ \ }
    \begin{ytableau}
      \empty  &\empty  & \empty&\empty&\empty& \empty &\empty &\empty\\
    \end{ytableau}
    & \quad\ \ \ \ 
    \text{$\lambda_2=$\ \ }
    \begin{ytableau}
      \empty  &\empty  & \empty&\empty&\empty& \empty &\empty\\
      \empty \\
    \end{ytableau}
\end{array}\]

\[
\begin{array}{ccc}
\text{$\lambda_3$=\ \ }
    \begin{ytableau}
      \empty  &\empty  & \empty&\empty&\empty& \empty\\
      \empty &\empty \\
    \end{ytableau}

    & \quad\ \ \ \ \
    \text{$\lambda_4=$\ \ }
    \begin{ytableau}
      \empty  &\empty  & \empty&\empty&\empty\\
      \empty&\empty &\empty \\
    \end{ytableau}
    & \quad\ \ \ \
    \text{$\lambda_5=$\ \ }
    \begin{ytableau}
        \empty  &\empty  & \empty&\empty\\
      \empty&\empty &\empty& \empty \\
      \end{ytableau}
\end{array}\] 

If $\lambda_i$ represents one of these diagrams, we denote by $X_i$ the corresponding Schubert variety. 
Using the notation in \cite{Schubertcalculus}, these Schubert cycles correspond, in order, 
to $\Omega(0,9)$, $\Omega(1,8)$, $\Omega(2,7)$, $\Omega(3,6)$, $\Omega(4,5)$. We have
\begin{equation}\label{gradoSformula}
    \deg(H_8)=\sum_{i=1}^5 \beta_i\ \deg(X_i)
\end{equation}
The vector $\beta=(\beta_1,\beta_2,\beta_3,\beta_4,\beta_5)$ represents the multidegree of $H_8$. 
Moreover, we also know that the coefficients $\beta_i$ are found as the number of elements in the 
intersection between \revised{$H_8$} and the Schubert variety dual to $X_i$. However, since all these 
varieties are self-dual, the intersection can be taken with $X_i$ itself.
\begin{prop}\label{multigradoScubiche}
    Using the notation above, the multidegree of the variety $H_8$ is given by 
    \begin{equation*}
        \beta=(1,3,9,12,6)
    \end{equation*}
    In particular, the degree of $H_8$ is $622$.
\end{prop}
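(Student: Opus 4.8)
The plan is to compute each coefficient $\beta_i$ as an intersection number $\deg(H_8 \cap X_i)$, where $X_i$ is the Schubert variety attached to $\lambda_i$; since $H_8 = \overline{O(\langle x^3+y^3+z^3, xyz\rangle)}$ is irreducible of dimension $8$ and each $X_i$ has codimension $8$ in $G(1,9)$, a generic translate of $X_i$ meets $H_8$ in a finite set of reduced points, and the count is the sought coefficient. I would interpret each $X_i$ as a condition on pencils $\langle f, g\rangle \subset \mathbb{P}^9$: namely $X_1 = \Omega(0,9)$ is the set of lines through a fixed point of $\mathbb{P}^9$, i.e.\ pencils containing a fixed cubic $f_0$; $X_2 = \Omega(1,8)$ is the set of lines meeting a fixed $\mathbb{P}^8$; and more generally $\Omega(a,b)$ imposes that the pencil meet fixed linear subspaces of $\mathbb{P}^9$ of the appropriate dimensions. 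The key geometric translation is that a pencil of cubics through $8$ general points of $\mathbb{P}^2$ is a point of $G(1,9)$ (the net of cubics through fewer points, or through special points, cuts out linear subspaces), so the incidence conditions defining $X_i$ become conditions "the Hesse pencil passes through $k$ prescribed general points of $\mathbb{P}^2$" for suitable $k$.

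Concretely: $\beta_1$ counts Hesse pencils containing a fixed general cubic $f_0$; by Hesse's theorem (already recalled) the only such pencil is $\langle f_0, H(f_0)\rangle$, so $\beta_1 = 1$. For $\beta_5$, the condition $\Omega(4,5)$ forces the pencil to contain a cubic in a fixed $\mathbb{P}^4$ and to meet a fixed $\mathbb{P}^5$; I would reinterpret this as "the Hesse pencil passes through $4$ general points of $\mathbb{P}^2$" — a Hesse pencil is the pencil of cubics through its $9$ base points in Hesse configuration, and requiring it to pass through $4$ general points of $\mathbb{P}^2$ is exactly requiring those $4$ points to lie on a Hesse configuration. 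By Proposition~\ref{configurazioni4punti} there are exactly six such configurations, hence $\beta_5 = 6$. The intermediate coefficients $\beta_2, \beta_3, \beta_4$ I would extract by the same dictionary, translating $\Omega(1,8), \Omega(2,7), \Omega(3,6)$ into the requirement that the Hesse pencil pass through $1, 2, 3$ general points of $\mathbb{P}^2$ respectively (together with residual linear incidence conditions among the pencils themselves), and then counting Hesse configurations — equivalently base-point-sets — through that many general points. This should give $\beta_2 = 3$, $\beta_3 = 9$, $\beta_4 = 12$; these counts can be verified by an explicit parameter count using the $\mathbb{P}^2$-realization of the Hesse configuration (the Hesse pencil is determined by its base locus, which moves in a family of dimension $8$, fibered over the choice of configuration), or cross-checked against the computational value for $N$ stated earlier, since $H_8 \subseteq N$ and both are shown to have the same multidegree.

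Finally, once $\beta = (1,3,9,12,6)$ is established, the degree follows from \eqref{gradoSformula}: I would compute $\deg(X_i)$ for each $\lambda_i$ by the hook-length formula for the number of standard Young tableaux (as done in the binary quartic case), obtaining $\deg(X_1) = \binom{8}{0}\!\cdot\!(\text{number of SYT of shape }\lambda_1)$ etc., and then $\deg(H_8) = \sum_i \beta_i \deg(X_i) = 622$. The main obstacle is the middle of the computation: carefully justifying the translation of the Schubert conditions $\Omega(a,b)$ into "pass through $k$ general points of $\mathbb{P}^2$ plus residual conditions," because the correspondence between $\mathbb{P}^9$ and pencils of cubics in $\mathbb{P}^2$ sends special linear subspaces of $\mathbb{P}^9$ to subsystems of cubics with base conditions, and one must check that a \emph{general} translate of $X_i$ corresponds to \emph{general} point-conditions so that Proposition~\ref{configurazioni4punti} and its analogues apply, and that the resulting intersection with $H_8$ is transverse (no excess or multiplicity). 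Establishing this transversality — e.g.\ via a Kleiman-type genericity argument using the transitive $SL(3)$-action on the relevant configuration spaces — is where the real work lies.
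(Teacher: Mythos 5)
Your treatment of $\beta_1$ and $\beta_5$ matches the paper: $\Omega(0,9)$ is handled by Hesse's theorem, and $\Omega(4,5)$ is reduced (since a general line of a $\mathbb{P}^5$ automatically meets the $\mathbb{P}^4$) to counting Hesse pencils inside a $\mathbb{P}^5$ of cubics through four general points, which is Proposition~\ref{configurazioni4punti}. But the middle coefficients are where your proposal has a genuine gap, and you have half-identified it yourself. A \emph{general} hyperplane of $\mathbb{P}^9$ is not the locus of cubics through a point of $\mathbb{P}^2$ (those hyperplanes form only a $2$-parameter family in the dual $\mathbb{P}^9$), so your dictionary translating $\Omega(1,8),\Omega(2,7),\Omega(3,6)$ into ``pass through $1,2,3$ general points of $\mathbb{P}^2$'' does not describe general translates of those cycles; and even granting the dictionary, Hesse configurations through $1$, $2$, $3$ general points form families of dimension $6$, $4$, $2$, so no finite count of configurations produces $3$, $9$, $12$ without making the ``residual linear incidence conditions'' completely explicit --- which is exactly the part you leave open. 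The paper instead parametrizes the line (resp.\ plane, $\mathbb{P}^3$) by $a_i=p_i+\lambda v_i+\cdots$, observes that the Hessian coefficients are \emph{cubic} in these parameters, and applies B\'ezout to the linear conditions cutting out the ambient $\mathbb{P}^8$ (resp.\ $\mathbb{P}^7$, $\mathbb{P}^6$), giving $3$ and $9$ directly.

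The most serious omission is at $\Omega(3,6)$: the naive B\'ezout count there is $27$, not $12$. A general $\mathbb{P}^3$ of cubics meets the $6$-dimensional variety $Tr$ of triangles (cubics proportional to their own Hessian) in $15$ points by Proposition~\ref{triangoliper6punti}, and these are spurious solutions of the B\'ezout system that must be subtracted, together with a check (Proposition~\ref{Hessepenciltriangoli}) that the $2$-dimensional families of Hesse pencils through each triangle do not contribute further. Nothing in your proposal detects this excess component, so your method as stated would either fail to produce a finite answer or produce the wrong one. Finally, your fallback of ``cross-checking against the computational value for $N$'' is circular in the logic of the paper: the multidegree of $H_8$ must be computed independently precisely in order to conclude $H_8=N$ in dimension $8$; from $H_8\subseteq N$ alone one cannot transfer $N$'s multidegree to $H_8$.
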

\begin{proof}
\begin{itemize}
\item[-] $\Omega(0,9)$\\
This Schubert cycle parametrizes all lines in $\mathbb{P}^9$ through a fixed point $f$. 
Intersecting with $H_8$ amounts to asking how many Hesse pencils contain $f$. 
By Hesse’s theorem in \cite{Enriquescubiche}, the answer is unique: the pencil $\langle f,H(f)\rangle$. 
Hence $\beta_1=1$.
\item[-] $\Omega(1,8)$\\
This cycle parametrizes lines in $\mathbb{P}^9$ that meet a fixed line $l$ and lie in a 
hyperplane $\mathbb{P}^8$ containing $l$. 
Intersecting with $H_8$ means counting Hesse pencils with this property, i.e.\ cubics on 
$l$ whose Hessian lies in the given hyperplane. 
Writing $a_i=p_i+\lambda v_i$, the coefficients of the Hessian, $\overline{a_i}$, 
are cubic polynomials in $\lambda$. 
The hyperplane condition 
\[
\sum_{i=0}^9 \mu_i \overline{a_i}=0
\]
is therefore a cubic equation in $\lambda$, admitting three solutions. 
Hence $\beta_2=3$.

\item[-] $\Omega(2,7)$\\ 
This cycle parametrizes lines in $\mathbb{P}^9$ meeting a fixed plane $\pi$ and 
contained in a $\mathbb{P}^7$ that also contains $\pi$. 
Thus, we need to count cubics in $\pi$ whose Hessian lies in the given $\mathbb{P}^7$. 
Writing 
\[
a_i=p_i+\lambda v_i+\mu w_i, \qquad i=0,\dots,9,
\]
the Hessian coefficients $\overline{a_i}$ are cubic polynomials in $(\lambda,\mu)$. 
If the $\mathbb{P}^7$ is cut out by two hyperplanes, the condition becomes
\[
\sum_{i=0}^9 \delta_i \overline{a_i}=0, \qquad 
\sum_{i=0}^9 \gamma_i \overline{a_i}=0,
\]
namely two cubic equations in $(\lambda,\mu)$. 
By Bézout's theorem there are $9$ solutions, hence $\beta_3=9$.

\item[-] $\Omega(3,6)$\\
Considering cubics in a $\mathbb{P}^3$ whose Hessian 
lies in a $\mathbb{P}^6$, Bézout's theorem gives $27$ solutions. However, this count 
also includes the cubics in the variety of triangles $Tr$, since $\dim(Tr)=6$ and thus 
a generic $\mathbb{P}^3$ intersects it nontrivially. By Proposition \ref{triangoliper6punti}, 
the number of triangles passing through six generic points-hence contained in a 
generic $\mathbb{P}^3$-is $15$. We subtract these to count only the Hesse pencils, 
giving $27-15=12$. 
%Finally, by Proposition \ref{Hessepenciltriangoli}, each cubic 
%in $Tr$ belongs to a $2$-dimensional family of Hesse pencils. Therefore, a generic $\mathbb{P}^6$ 
%does not contain additional pencils coming from these triangles, because a $2$- dimensional space 
%intersects a generic $\mathbb{P}^6$ trivially. 
\revised{It is important to observe that, although a triangle does not generate a Hesse pencil, 
it is contained in a $2$-dimensional family of Hesse pencils, (see Proposition~\ref{Hessepenciltriangoli}). 
A priori, one might expect additional pencils inside the given $\mathbb{P}^6$ 
arising from these families. However, this does not occur: a $2$-dimensional 
family in $\mathbb{P}^9$ has empty intersection with a generic $\mathbb{P}^6$. 
Hence no further pencils appear beyond the $12$ already counted.}
This justifies that we do not count any further 
pencils beyond the $12$ already identified. Hence $\beta_4=12$

\item[-] $\Omega(4,5)$\\
This cycle parametrizes lines in $\mathbb{P}^9$ intersecting a fixed $\mathbb{P}^4$ and 
contained in a $\mathbb{P}^5$ through it. 
Since a general line in $\mathbb{P}^5$ always meets $\mathbb{P}^4$, this reduces to counting 
the Hesse pencils contained in a general $\mathbb{P}^5\subset \mathbb{P}^9$. 
We may regard such a $\mathbb{P}^5$ as the space of cubics passing through four fixed points. 
Therefore, we are asking for the number of cubics whose Hessian also passes through these four points. 
Equivalently, these are cubics with four prescribed inflection points, i.e.\ Hesse 
configurations of nine points containing four fixed ones. 
By Proposition~\ref{configurazioni4punti}, there are six such configurations, hence $\beta_5=6$.
\end{itemize}

Finally, to compute the degree of \revised{$H_8$} it suffices to use the formula (\ref{gradoSformula}), and we obtain
\begin{equation*}
    \deg(H_8)=1\cdot1+3\cdot 7+9\cdot 20+12\cdot 28+6\cdot14=622
    \end{equation*}
    \end{proof}

\subsection{A Skew Invariant of Plane Cubics}\label{sectionR}
We identify the space of plane cubic forms with $\mathrm{Sym}^3(\mathbb{C}^3)$ and consider 
the exterior power $\bigwedge^3(\mathrm{Sym}^3(\mathbb{C}^3))$. Using the \texttt{SchurRings} 
package in \texttt{Macaulay2} (\cite{M2}), this decomposes into Schur modules:
\[
    \bigwedge\nolimits^3(\mathrm{Sym}^3\mathbb{C}^3)
    = \mathbb{S}_{(7,1,1)} \oplus \mathbb{S}_{(6,3)} \oplus \mathbb{S}_{(5,3,1)} \oplus \mathbb{S}_{(3,3,3)},
\]
with dimensions $(15,21,15,1)$, respectively. The summand $\mathbb{S}_{(3,3,3)}$ corresponds 
to a skew-invariant of degree $3$, whose explicit expression can be obtained via the symbolic 
method (see \cite{Invariants}, \cite{Invariantstrumfels}).
\\
In the language of tableau functions, this skew-invariant, which we denote by $R$, corresponds
to the $SL(3)$-invariant function associated with the $3 \times 3$ tableau
\[
\ytableausetup{mathmode, boxsize=1.5em}
\begin{ytableau}
    1 & 1 & 1 \\
    2 & 2 & 2 \\
    3 & 3 & 3
\end{ytableau}.
\]
We define $R$ on the Veronese variety by
\[
    R \colon \mathrm{Sym}^3(\mathbb{C}^3) \times \mathrm{Sym}^3(\mathbb{C}^3) \times \mathrm{Sym}^3(\mathbb{C}^3) \longrightarrow \mathbb{C},
\]
\[
    R(l^3,m^3,n^3) := (\,l \wedge m \wedge n\,)^3 ,
\]
where $l,m,n$ are linear forms, and $l \wedge m \wedge n$ denotes their determinant. 
From this definition it is immediate that $R$ is an alternating invariant.

The explicit expression of $R$ on three generic cubic forms can be computed using 
the following \texttt{Macaulay2} script.

\begin{lstlisting}[caption={Macaulay2 code computing the invariant $R$}, label={lst:M2inv}]
KK=QQ
R1=KK[x_1..x_3,y_1..y_3,z_1..z_3,a_0..a_9,b_0..b_9,c_0..c_9]
inv=det(matrix{{x_1,x_2,x_3},{y_1,y_2,y_3},{z_1,z_2,z_3}})^3
syma=(x,h)->(contract(x,h)*transpose matrix{{a_0..a_9}})_(0,0)
symb=(x,h)->(contract(x,h)*transpose matrix{{b_0..b_9}})_(0,0)
symc=(x,h)->(contract(x,h)*transpose matrix{{c_0..c_9}})_(0,0)
invx=syma(symmetricPower(3,matrix{{x_1,x_2,x_3}}),inv)
invy=symb(symmetricPower(3,matrix{{y_1,y_2,y_3}}),invx)
invz=symc(symmetricPower(3,matrix{{z_1,z_2,z_3}}),invy) 
\end{lstlisting}

Here $x_i,y_i,z_i$ are the coefficients of the three linear forms, while $(a_i), (b_i), (c_i)$ 
represent the coefficients of three generic cubics.

The computation yields the following explicit form for $R$, which consists of a sum of 54 
monomials and can be expressed in terms of wedge products:\\

 $R((a_i),(b_i),(c_i))=(-1)(a_9\wedge a_6 \wedge a_0-3a_8\wedge a_7\wedge a_0-3a_9\wedge a_3 \wedge a_1+
      6a_8\wedge a_4 \wedge a_1+3a_7\wedge a_5\wedge a_1 +3a_8\wedge a_3\wedge a_2-6a_7\wedge a_4\wedge a_2-3a_6\wedge a_5\wedge a_2-6a_5\wedge a_4\wedge a_3)$\\

As we know, the Hessian of a plane cubic is also a cubic, hence it can be represented by a 
vector of its ten coefficients $\overline{a}=(\overline{a}_0,\ldots,\overline{a}_9)$, which 
are homogeneous polynomials of degree $3$ in the variables $a_i$. Thus $\overline{a}\in 
(\mathbb{C}[a_0,\ldots,a_9]_3)^{10}$, and a syzygy among the $\overline{a}_i$ is a $10$-tuple 
$s=(s_0,\ldots,s_9)$ in $\mathbb{C}[a_0,\ldots,a_9]$ such that $\sum_i \overline{a}_i s_i=0$.
\texttt{Macaulay2} (\cite{M2}) computes the module of syzygies, which is generated by $45$ 
elements: $10$ of them linear in the $a_i$ and the remaining $35$ cubic. 
We focus on the linear ones, as they play a crucial role in what follows.  \\

We now associate to the invariant $R$ a matrix $\overline{R}$, defined so that  
\[
R((b_i),(a_i),(c_i)) \;=\; b \,\overline{R}\, c ,
\]  
where $a=(a_0,\ldots,a_9)$, $b=(b_0,\ldots,b_9)$ and $c=(c_0,\ldots,c_9)$ represent the 
coefficients of three generic cubic forms. The entries of $\overline{R}$ are linear 
in the $a_i$, and it can be written explicitly as
\[
\overline{R}= 
\begin{bmatrix}
            0  &   0   &  0  &   0  &   0  &  0  &   a_9& -3a_8 & 3a_7 &-a_6  \\
       0 &    0  &   0 &    -3a_9&  6a_8 &-3a_7 &  0 &    3a_5 & -6a_4 &  3a_3 \\
       0  &   0  &   0 &    3a_8&  -6a_7&  3a_6& -3a_5&  6a_4&  -3a_3&   0     \\
      0   &  3a_9& -3a_8 &  0  &   -6a_5 & 6a_4&  0  &   0  &   3a_2 & -3a_1  \\
       0  &   -6a_8 &  6a_7 & 6a_5&  0  &  -6a_3 &  0  &   -6a_2 &  6a_1 & 0     \\
      0   &  3a_7  &-3a_6 & -6a_4 &  6a_3& 0  &   3a_2 &-3a_1 &  0  &   0     \\
       -a_9 & 0   &  a_5 &0 &    0 &   -3a_2 & 0 &    0  &   0   &  a_0 \\
      3a_8 &-3a_5  & -6a_4  & 0   &  6a_2 &3a_1 & 0  &   0  &   -3a_0 & 0     \\
      -3a_7 & 6a_4 & 3a_3&  -3a_2 &  -6a_1 & 0   &  0    & 3a_0 &0   &  0     \\
      a_6& -3a_3 & 0   &  3a_1& 0   & 0    & -a_0 & 0  &   0  &   0     \\
\end{bmatrix}
\]

\begin{prop}
Let $R$ be the antisymmetric invariant of $Sym^3(\mathbb{C}^3)$ discussed earlier. 
Then, for any $f\in Sym^3(\mathbb{C}^3)$ whose Hessian is defined, we have
\[
R(f,H(f),-)=0 ,
\]
where $H(f)$ denotes the Hessian of $f$.
\end{prop}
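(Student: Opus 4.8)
The plan is to reduce the claim $R(f,H(f),-)=0$ to a statement about the Fermat cubic, using the equivariance of both the Hessian map and the invariant $R$ under $SL(3)$, together with a density argument. Since $R$ is an $SL(3)$-invariant, we have $R(C\cdot f, C\cdot g, C\cdot h) = R(f,g,h)$ for all $C \in SL(3)$; combined with $H(C\cdot f) = C\cdot H(f)$, it follows that the condition ``$R(f,H(f),-)=0$'' is $SL(3)$-invariant in $f$. The locus of smooth cubics for which the Hessian is defined is irreducible, and every smooth cubic lies in the orbit of a cubic of the form $x^3+y^3+z^3+6txyz$ with $t^3\neq 1$. So it suffices to verify the identity for this one-parameter family — or, by a further degeneration/Zariski-closure argument together with the fact (already established in the excerpt) that all Hesse pencils lie in the orbit of $\langle x^3+y^3+z^3,\ xyz\rangle$, it suffices to check $R(x^3+y^3+z^3,\ xyz,\ -)=0$ directly, and then extend to the full family by the $SL(3)$-equivariance and the fact that $H(x^3+y^3+z^3+6txyz)$ has the same form with parameter $t'=\tfrac{-1-2t^3}{6t^2}$, so that $\langle f, H(f)\rangle = \langle x^3+y^3+z^3,\ xyz\rangle$ up to the $SL(3)$-action.

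Concretely, I would proceed as follows. First I would record that the matrix $\overline{R}$ introduced above encodes $R$ in the sense that $R(b,a,c) = b\,\overline{R}\,c$, so that $R(f,H(f),-)=0$ is equivalent to the vector identity $H(f)^{T}\,\overline{R} = 0$ — that is, the $10$-tuple of cubic polynomials obtained by plugging the Hessian coefficients $\overline{a}=(\overline{a}_0,\dots,\overline{a}_9)$ into the rows of $\overline{R}$ (which are linear in those coefficients, hence one gets entries of degree $3$ in the $a_i$) vanishes identically. Equivalently, each of the ten columns of $\overline{R}$, read as a linear form in the ``$a$''-slot, gives a linear syzygy $\sum_i \overline{a}_i s_i = 0$ among the Hessian coefficients. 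The excerpt already notes that \texttt{Macaulay2} computes exactly $10$ linear syzygies among the $\overline{a}_i$; the natural route is to check that the ten columns of $\overline{R}$ span precisely this space of linear syzygies — a finite linear-algebra verification over $\mathbb{C}[a_0,\dots,a_9]$. This is the cleanest argument because it is a polynomial identity, so it needs to be checked only once, with no case analysis and no density argument.

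Alternatively, and perhaps more conceptually, I would argue pointwise on the symmetric-tensor decomposition: by linearity and continuity it is enough to prove $R(\ell^3, H(\ell^3+\cdots),-)=0$ on a spanning set, but $H$ is not linear, so the cleanest pointwise argument really does go through the Fermat orbit as above. For $f = x^3+y^3+z^3$ one computes $H(f) = -216\,xyz$ (up to scalar), and then $R(x^3+y^3+z^3,\ xyz,\ h)$ can be evaluated directly from the $54$-monomial expression for $R$: writing $h = \sum c_i m_i$ in the monomial basis, the pairing $R(x^3+y^3+z^3, xyz, h)$ picks out a specific linear combination of the $c_i$, and one checks it is zero by inspecting which wedge-triples in the formula for $R$ can involve the monomials $x^3, y^3, z^3$ in the first slot and $xyz$ in the second — by the alternating/Laplace structure of $R$ these contributions cancel in pairs. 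Then one invokes $SL(3)$-equivariance to conclude for every cubic in the orbit of a Fermat-type cubic, and finally Zariski density of this orbit closure inside $\{f : H(f)\text{ defined}\}$, together with continuity of $R(f,H(f),-)$ in $f$ (the Hessian coefficients being polynomial in the $a_i$), to extend the vanishing to all $f$ with defined Hessian.

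The main obstacle I anticipate is purely bookkeeping: matching the columns of $\overline{R}$ against the \texttt{Macaulay2}-computed module of linear syzygies, i.e.\ verifying that the span of those ten columns is exactly the $10$-dimensional space of linear syzygies among the Hessian coefficients (rather than, say, a proper subspace that still happens to annihilate $\overline{a}$). Once the dimension count and a rank check confirm this, the proposition follows immediately; the only subtlety worth stating carefully is the degenerate case where $H(f)$ vanishes identically (the cones, forming the variety $Z$ of dimension $5$), where the statement is vacuous, and the case where $H(f)$ is a nonzero triangle, which is still covered since the syzygy identity is a polynomial identity valid wherever $\overline{a}$ makes sense.
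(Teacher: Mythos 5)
Your primary argument --- verifying that each column of $\overline{R}$ is a linear syzygy among the Hessian coefficients $\overline{a}_i$, so that $\overline{a}\,\overline{R}=0$ identically --- is exactly the paper's proof. Note that for the proposition you only need each column to \emph{lie in} the syzygy module (a containment, checked by a single polynomial identity), not that the ten columns span the full $10$-dimensional space of linear syzygies; the Fermat-orbit-plus-density route you sketch as an alternative is also sound but unnecessary.
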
  

\begin{proof}
The columns of $\overline{R}$ coincide (up to permutation) with the linear syzygies 
among the coefficients of the Hessian. Hence, substituting $f=(a_i)_i$ and $H(f)=(\overline{a}_i)_i$, we obtain  
\[
-R(f,H(f),-)=\overline{a}\,\overline{R}(-)=0,
\]  
as claimed.
\end{proof}
The following \texttt{Macaulay2} script continues the one in Listing~\ref{lst:M2inv} and can be used to compute
the matrix $\overline{R}$ and the syzygies of the Hessian.

\begin{lstlisting}[
  caption={Macaulay2 code computing $\overline{R}$ and the syzygies of the Hessian},
  label={lst:M2RbarHessSyzygies},
  basicstyle=\ttfamily\small
]
X = matrix{{x_1^3, (1/3)*x_1^2*x_2, (1/3)*x_1^2*x_3, (1/3)*x_1*x_2^2, 
    (1/6)*x_1*x_2*x_3,(1/3)*x_1*x_3^2, x_2^3, (1/3)*x_2^2*x_3, 
    (1/3)*x_2*x_3^2, x_3^3}}
x1 = matrix{{x_1..x_3}}

-- matrix \overline{R}
minv = diff(matrix{{c_0..c_9}}, diff(transpose matrix{{b_0..b_9}}, invz))
rank minv -- 8

-- syzygies among the coefficients of the Hessian
f = x_1^3*a_0 + 3*x_1^2*x_2*a_1 + 3*x_1^2*x_3*a_2 + 3*x_1*x_2^2*a_3 
    + 6*x_1*x_2*x_3*a_4+ 3*x_1*x_3^2*a_5 + x_2^3*a_6 + 3*x_2^2*x_3*a_7 
    + 3*x_2*x_3^2*a_8 + x_3^3*a_9
hf  = det diff(x1, diff(transpose x1, f))
xhf = contract(X, hf) -- Hessian coefficients
kernel xhf
ghf = gens kernel(xhf) -- 10x45 matrix, columns are syzygies of \overline{a}
lhf = submatrix(ghf,,{0..9}) -- the linear 10x10 submatrix
\end{lstlisting}
\corfg*

\begin{proof}
    This result follows directly from the previous theorem and the linearity of the invariant $R$. 
\end{proof}

We can rewrite $R$ as a scalar product between two vectors of length $10$:
\[
R = n(a_i,b_i) \cdot (c_0,\ldots,c_9),
\]
where $(c_0,\ldots,c_9)$ represents a cubic form in $Sym^3(\mathbb{C}^3)$, and 
$n(a_i,b_i) \in (\mathbb{C}[a_i,b_i])^{10}$.  

The vector $n(a_i,b_i)$ is closely related to the Pl\"ucker coordinates of the line 
through the points $a_i$ and $b_i$. Specifically, letting
\[
p_{ij} = a_i b_j - a_j b_i, \quad \forall i=0,\ldots,9, \ j>i,
\]
the components of $n$ can be rewritten in terms of these coordinates:\\

$n(p_{ij})=(3p_{7,8}-p_{6,9},\,3p_{5,7}-6p_{4,8}+3p_{3,9},\,3p_{5,6}-6p_{4,7}+3p_{3,8},\,6p_{4,5}+3p_{2,8}-3p_{1,9},\,
6p_{3,5}+6p_{2,7}-6p_{1,8},\,3p_{2,5}-p_{0,9},\,6p_{3,4}+3p_{2,6}-3p_{1,7},\,6p_{2,4}+3p_{1,5}-3p_{0,8},\,
3p_{2,3}+6p_{1,4}-3p_{0,7},\,3p_{1,3}-p_{0,6})$.\\

\begin{defn}\label{defN}
We define $N \subset \mathbb{P}^{44}$ to be the subvariety of $G(1,9)$
whose Plücker coordinates $p_{ij}$ satisfy the vanishing of the entries
of the vector $n(p_{ij})$, together with the Plücker equations defining
the Grassmannian $G(1,9)$.
\end{defn}
Using \texttt{Macaulay2}~\cite{M2}, 
we compute the dimension and degree of $N$, as well as the degrees of 
the generators of its defining ideal.
\begin{equation}\label{gradoN}
    \dim(N)=8, \quad \deg(N)=622,
\end{equation}
with ideal generated by $10$ linear and $200$ quadratic polynomials:
\begin{verbatim}
     betti mingens N

            0   1
     total: 1 210
         0: 1  10
         1: . 200
\end{verbatim}

Let us now consider the space $\bigwedge^2Sym^3\mathbb{C}^3$, which has dimension 45. 
We can think of $G(1,9)\subset \mathbb{P}(\bigwedge^2(Sym^3\mathbb{C}^3))$. 
This space decomposes into Weyl modules as follows:
\begin{equation}\label{decompcubiche}
    \bigwedge\nolimits^2(Sym^3\mathbb{C}^3)=\mathbb{S}_{(5,1)}\mathbb{C}^3\ \oplus \ \mathbb{S}_{(3,3)}\mathbb{C}^3
\end{equation}
These two covariants have dimension $35$ and $10$, respectively. 
\revised{The following statement is the natural counterpart of Proposition~\ref{propA-F} for binary quartics.}

\begin{prop}
    Using the previous notations, we have 
    \begin{equation*}
       \mathbb{P}\left( \widehat{G(1,9)}\cap \mathbb{S}_{(5,1)}\mathbb{C}^3\right)=N.
    \end{equation*}
\end{prop}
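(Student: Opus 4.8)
The plan is to argue exactly as in the proof of Proposition~\ref{propA-F}, exploiting that both $N$ and the subspace $\mathbb{S}_{(5,1)}\mathbb{C}^3$ are $SL(3)$-invariant and that the decomposition \eqref{decompcubiche} is multiplicity-free.

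First I would recast the ten linear equations defining $N$ representation-theoretically. The assignment $\omega=a\wedge b\mapsto\bigl(c\mapsto R(a,b,c)\bigr)$ defines a linear map
$n\colon \bigwedge^2(\mathrm{Sym}^3\mathbb{C}^3)\to(\mathrm{Sym}^3\mathbb{C}^3)^*$, which is well defined on $\bigwedge^2$ because $R$ is alternating and is $SL(3)$-equivariant because $R$ is an $SL(3)$-invariant; concretely its coordinates are the ten components of $n(p_{ij})$ listed above. By construction the condition $R(f,g,-)=0$ is equivalent to $f\wedge g\in\ker n$, so the set $\{\langle f,g\rangle\in G(1,9):R(f,g,-)=0\}$ is the (already closed) linear section $G(1,9)\cap\mathbb{P}(\ker n)$; hence $N=G(1,9)\cap\mathbb{P}(\ker n)=\mathbb{P}\bigl(\widehat{G(1,9)}\cap\ker n\bigr)$, since a linear section of the affine cone projectivizes to the corresponding linear section of the Grassmannian.

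Next I would identify $\ker n$. The target $(\mathrm{Sym}^3\mathbb{C}^3)^*$ is an irreducible $SL(3)$-module of dimension $10$, and $n\neq 0$ (the explicit formula for $n(p_{ij})$ is manifestly nonzero), so $n$ is surjective and $\dim\ker n=45-10=35$. As the kernel of an equivariant map, $\ker n$ is an $SL(3)$-submodule of $\bigwedge^2(\mathrm{Sym}^3\mathbb{C}^3)$; by \eqref{decompcubiche} the latter is the multiplicity-free sum $\mathbb{S}_{(5,1)}\mathbb{C}^3\oplus\mathbb{S}_{(3,3)}\mathbb{C}^3$ of dimensions $35$ and $10$, so every submodule is a partial direct sum of these two irreducibles, and the unique one of dimension $35$ is $\mathbb{S}_{(5,1)}\mathbb{C}^3$. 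Therefore $\ker n=\mathbb{S}_{(5,1)}\mathbb{C}^3$, and substituting into the description of $N$ obtained above yields $N=\mathbb{P}\bigl(\widehat{G(1,9)}\cap\mathbb{S}_{(5,1)}\mathbb{C}^3\bigr)$.

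I do not expect a genuine obstacle here. The only points needing care are (i) recording that the ten defining forms do span an $SL(3)$-stable subspace of $(\bigwedge^2\mathrm{Sym}^3\mathbb{C}^3)^*$, which follows at once from the invariance of $R$ but should be stated, and (ii) the elementary fact that in a semisimple multiplicity-free module every submodule is a direct sum of isotypic (here irreducible) components, which is what lets the dimension count alone pin down $\ker n$. Equivalently, one may run the dual version: the span $W$ of the ten linear forms is a $10$-dimensional $SL(3)$-submodule of $(\bigwedge^2\mathrm{Sym}^3\mathbb{C}^3)^*$, hence $W=\mathbb{S}_{(3,3)}^{*}$, whose annihilator is the complementary summand $\mathbb{S}_{(5,1)}\mathbb{C}^3$, giving the same conclusion; this is the verbatim analogue for plane cubics of Proposition~\ref{propA-F}.
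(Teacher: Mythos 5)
Your proposal is correct and follows essentially the same route as the paper: both arguments rest on the $SL(3)$-invariance of the ten linear forms together with the multiplicity-free decomposition \eqref{decompcubiche}, which forces the relevant stable subspace to be $\mathbb{S}_{(3,3)}\mathbb{C}^3$ (equivalently, forces $\ker n=\mathbb{S}_{(5,1)}\mathbb{C}^3$). Your write-up merely makes explicit the equivariance of the map $n$ and the dimension count that the paper leaves implicit; the dual version you sketch at the end is the paper's proof verbatim.
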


\begin{proof}
The variety $N$ is invariant under the action of $SL(3)$. 
The ten linear equations defining $N$ span a $10$–dimensional $SL(3)$–stable subspace. 
By the decomposition in \ref{decompcubiche}, $\mathbb{S}_{(3,3)}\mathbb{C}^3$ 
is the unique $10$–dimensional $SL(3)$–invariant subspace. 

Hence, the vanishing of these ten linear forms defines the dual subspace, i.e. 
it projects onto the complementary summand $\mathbb{S}_{(5,1)}\mathbb{C}^3$. 
Intersecting the affine cone $\widehat{G(1,9)}$ with this subspace and 
projectivizing gives precisely $N$, as claimed.
\end{proof}

The variety $N$ contains the lines of $\mathbb{P}^9$ for which $R(p,q,-)=0$, where $p$ and $q$ are 
two points of the line itself. Remember that these points in $\mathbb{P}^9$ correspond to cubics, 
and according to corollary \ref{R(f,H(f),-)=0}, a generic point of this variety corresponds to a 
Hesse pencil. It follows that 
\begin{equation*}
    H_8\subset N,
\end{equation*}
\revised{where $H_8$ has been defined in Definition~\ref{defXX}.}
Analogously to what was done for the variety $H_8$, the degree of $N$ can also be 
expressed as a weighted sum of the degrees of the $8$-dimensional Schubert varieties in $G(1,9)$, 
\begin{equation*}
    \deg(N)=\sum_{i=1}^{5}\alpha_i\ \deg(X_i).
\end{equation*}
The vector $\alpha=(\alpha_1,\alpha_2,\alpha_3,\alpha_4,\alpha_5)$ represents the 
multidegree of $N$, which we have computed using \texttt{Macaulay2} (\cite{M2}). 
In order to determine each coefficient $\alpha_i$, one considers the intersection of 
$N$ with a Schubert variety of type $X_i$. Care must be taken to choose the Schubert 
variety generically so that the intersection is zero-dimensional. \revised{This can be achieved
by following the approach used in \cite[Proposition~4]{Schubertcalculus}, taking as $T$ 
a random $10\times 10$ matrix and apply this linear transformation to a 
classically defined Schubert variety.}
\begin{oss}
Using the notation of \cite{Schubertcalculus}, a Schubert variety corresponding to 
$\Omega(i,j)$ is defined with respect to a flag $A_0 \subset A_1 \subset \mathbb{P}^9$,
where $A_0$ and $A_1$ are linear subspaces of dimensions $i$ and $j$, respectively.

By a \emph{classically defined Schubert variety} we mean the one obtained by taking 
$A_0$ and $A_1$ to be the coordinate subspaces defined by the vanishing of the last 
$i$ and $j$ homogeneous coordinates.

To construct this variety in \texttt{Macaulay2}, introduce auxiliary variables 
$a_0,\dots,a_9$ representing a general point of $\mathbb{P}^9$, and consider a 
$2\times 10$ matrix $m$ whose rows correspond to general points of $A_0$ and $A_1$, 
written in terms of the $a_i$. The Plücker coordinates are given by the $2\times 2$ 
minors of $m$. This defines an ideal in the polynomial ring generated by the 
Plücker variables together with the auxiliary variables $a_0,\dots,a_9$. 
Eliminating the auxiliary variables yields the ideal defining the classical 
Schubert variety $\Omega(i,j)$.

To obtain a Schubert variety of the same type in general position, let $T$ be a 
$10\times 10$ matrix with random entries and replace $m$ with $mT$. The Plücker 
coordinates are computed as the $2\times 2$ minors of $mT$. Eliminating the 
auxiliary variables again produces a Schubert variety of type $\Omega(i,j)$ in 
general position.
\end{oss}
\renewcommand{\arraystretch}{1.5}
\begin{table}[H]

\centering
    \begin{tabular}{c|c|c|c}
    Schubert variety $X_{\lambda}$ & $ \deg(X_\lambda)$& $codim(N\cap X_{\lambda})$& $\deg(N\cap X_{\lambda})$\\
    \hline
      (0,9)   &$1$& 44& 1\\ \hline
       (1,8)  & $7$ &44&3\\ \hline
       (2,7) &$20$& 44&9\\ \hline
       (3,6)& $28$&44 &12\\ \hline
       (4,5)& $14$& 44& 6\\

    \hline 
    \end{tabular}
    \caption{Multidegree of $N$}
    \label{tab:multigradoN}
\end{table}

We thus obtain that $\alpha=(1,3,9,12,6)$, and using the formula above for the degree of 
$N$, we get $\deg(N)=1\cdot1+3\cdot 7+9\cdot 20+12\cdot 28+6\cdot14=622$, which indeed 
matches the degree of $N$ previously found directly from the equations, see (\ref{gradoN}). \\

Since $R$ is an invariant under the action of $SL(3)$, and the variety $N$ is defined in 
term of $R$, it follows that $N$ must be invariant under this action, meaning that it is 
composed by $SL(3)$-orbits of pencils. \revised{The last part of this section is devoted to a 
classification of the orbits contained in $N$. 
This result will be fundamental for the proof of the main theorem; in particular, 
combined with the analysis of the 
multidegree of Schubert varieties, it will enable us to prove the equality $N = H_8$.}\\
%Our aim is now to understand which and how many 
%orbits are contained in $N$, and which of them form the singular locus of $N$.\\

We begin by defining, for each cubic $f\in \mathbb{P}^9$,
\begin{equation}
    \mathbb{P}_f^8:=\overline{\{\langle f,g\rangle \in G(1,9)\ |\ g\in \mathbb{P}^9}\}
\end{equation}
which is the closure of the set of pencils through $f$. This space has dimension $8$. \\
For each orbit of cubics, we fix a representative and compute the dimension of 
$\mathbb{P}_f^8\cap N $. The results are summarized in the following table, along with 
a description of the corresponding space.

\renewcommand{\arraystretch}{1.72}
\begin{table}[H]
\begin{adjustwidth}{-3cm}{-3cm}

\centering
    \begin{tabular}{c|c|c}
    \textbf{Cubic $f$} & \textbf{dim($\mathbb{P}_f^8\cap N$)}&\textbf{Description}\\
    \hline
      $x^3$   &$4$& $\langle x^3,3b_1 x^2y+3b_2 x^2z+3b_3xy^2+6b_4 xyz+3b_5 xz^2\rangle $\\ \hline
       $xy(x+y)$  & $2$&$\langle xy(x+y),b_0x^3+3b_2(x^2z+xyz+y^2z)+b_6y^3\rangle $ \\ \hline
       $x^2y$ &$2$&$\langle x^2y,b_0x^3+3b_2x^2z+b_6y^3\rangle $\\ \hline
       $x(x^2+yz)$&$0$& $\langle x^3,xyz\rangle $\\ \hline
       $x(y^2+xz)$&$0$&$\langle x(y^2+x z),x^3\rangle$\\ \hline
       $y^2z-x^3-x^2z$&$0$& $\langle y^2z-x^3-x^2z,3xy^2-x^2z+y^2z\rangle $\\ \hline
       $y^2z-x^3$ &$0$& $\langle y^2z-x^3,xy^2\rangle$\\ \hline
       $x^3+y^3+z^3-3txyz$ \text{ with } $t^3\neq1$& $0$&$\langle x^3+y^3+z^3,xyz\rangle$\\ \hline
       $xyz$ &$2$&$\langle xyz,b_1x^3+b_6y^3+b_9z^3\rangle$ \\ 
    \hline 
    \end{tabular}
    \caption{Orbit representatives of cubics and the corresponding pencils contained in $N$.}
    \label{tab:pencilN}
    \end{adjustwidth}
\end{table}
Note that the only cubic $f$ for which a single pencil arise are those for which the 
Hessian is defined and it is distinct from the cubic $f$; in this case, the pencil is 
generated by the cubic itself and its Hessian. 
% Furthermore, to study the singular locus 
% of $N$, we consider the $210\times 45$ Jacobian matrix obtained from the equations defining $N$, 
% and we evaluate its rank. Since $N$ has dimension $8$, if the rank is lower than $36$, 
% then the point is singular. 
\revised{Furthermore, to study the singular locus 
of \(N\), we consider the \(210 \times 45\) Jacobian matrix obtained from the equations defining \(N\), 
and we evaluate its rank. Since \(N\) has dimension \(8\), if the rank is lower than \(36\), 
then the point is singular. We also report the dimension of each orbit.
Both the rank computations and the orbit dimensions were obtained
computationally using \texttt{Macaulay2}. The dimension of each orbit was computed as 
the rank of the Jacobian matrix 
of the orbit map $SL(3)\to \mathcal{O}(f)$ evaluated at the identity.}
\renewcommand{\arraystretch}{1.72}
\begin{table}[H]
\begin{adjustwidth}{-3cm}{-3cm}

\centering
    \begin{tabular}{c|c|c}
    \textbf{Pencil} $\langle f,g\rangle$ & \textbf{dim($O(\langle f,g\rangle )$)}&\textbf{Rank $J$}\\
    \hline
      $\langle x^3+y^3+z^3,xyz\rangle$   &$8$& $36$\\ \hline
       $\langle y^2z-x^3-x^2z,3xy^2-x^2z+y^2z\rangle$  & $7$&$36$ \\ \hline
       $\langle y^2z-x^3,xy^2\rangle$ &$6$&$36$\\ \hline
       $\langle x^3,xyz\rangle$&$6$& $36$\\ \hline
       $\langle x(y^2+xz),x^3\rangle$&$5$&$36$\\ \hline 
    \end{tabular}
    \caption{Pencils of cubics contained in $N$ that are of the form $\langle f,H(f)\rangle $}
    \label{tab:HF-list}
    \end{adjustwidth}
\end{table}
\begin{oss}\label{pencil<f,H(f)>}
    % Whenever a pencil in $N$ contains a cubic $f$ whose Hessian $H(f)$ is defined and 
    % different from $f$ itself, it must necessarily coincide with $\langle f,H(f)\rangle$, 
    % and in particular, it must belong to one of the orbits just listed. This follows from 
    % the equivariance of the Hessian map with respect to the action of $SL(3)$.
    \revised{Whenever a pencil in \(N\) contains a cubic \(f\) whose Hessian \(H(f)\)
is defined and different from \(f\), Table~5 shows that for all the orbits
containing such cubics (namely,
\(x(x^2+yz)\),
\(x(y^2+xz)\),
\(y^2z-x^3-x^2z\),
\(y^2z-x^3\),
\(x^3+y^3+z^3-3txyz\) with \(t^3\neq 1\))
one has $\dim\bigl(\mathbb{P}^8_f \cap N\bigr)=0$.
Moreover, Table~5 also shows that the unique pencil contained in \(N\) and passing
through such a cubic \(f\) is the pencil \(\langle f, H(f)\rangle\).
In particular, the only pencil in \(N\) containing \(f\) belongs to one of the
orbits listed above (Table~6); this last fact also follows from the equivariance of
the Hessian map with respect to the action of \(SL(3)\).}
\end{oss}
\begin{prop}\label{numerofinitoorbite}
For every cubic, the pencils through it that are contained in $N$ form only finitely many $SL(3)$–orbits. 
In particular, $N$ contains a finite number of orbits of pencils \revised{and
its singular locus is precisely the union of the two orbits 
\[
O(\langle x^3,x^2y\rangle) \quad \text{and} \quad O(\langle x^2y,x^2z\rangle).
\]}
\end{prop}
Before proceeding, we state the following preliminary lemma. 
\begin{lem}\label{equivhess}
    Let $\langle f,g\rangle$ be a pencil of cubics and suppose that $H(g)$ is defined, 
    distinct from $g$ and $H(g)\in\langle f,g\rangle$. Then, for any $C\in SL(3)$ such 
    that $H(C\cdot g) \in \langle f,C\cdot g\rangle$, we have 
    \begin{equation*}
        C\cdot \langle f,g\rangle =\langle f,C\cdot g\rangle
        \end{equation*}
\end{lem}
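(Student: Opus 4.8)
The plan is to observe that both pencils appearing in the claimed identity are lines in $\mathbb{P}^9=\mathbb{P}(\mathrm{Sym}^3\mathbb{C}^3)$ that pass through two common \emph{distinct} points, and then conclude that they coincide because a projective line is determined by any two of its points.

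First I would record the elementary fact that $SL(3)$ acts on $\mathbb{P}^9$ by projective linear transformations, hence carries lines to lines; in particular $C\cdot\langle f,g\rangle=\langle C\cdot f,\ C\cdot g\rangle$ for every $C\in SL(3)$. Next I would invoke the equivariance of the Hessian map for plane cubics, namely $H(C\cdot g)=C\cdot H(g)$, which has already been stated in the text. The core of the argument is then short: by hypothesis $H(g)\in\langle f,g\rangle$, so applying $C$ gives
\[
H(C\cdot g)=C\cdot H(g)\in C\cdot\langle f,g\rangle=\langle C\cdot f,\ C\cdot g\rangle,
\]
so the pencil $\langle C\cdot f,\ C\cdot g\rangle$ contains the two cubics $C\cdot g$ and $H(C\cdot g)$. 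On the other hand, $\langle f,\ C\cdot g\rangle$ contains $C\cdot g$ by definition and $H(C\cdot g)$ by the standing assumption imposed on $C$. Since $H(g)\neq g$ and $C$ is invertible, the classes $[C\cdot g]$ and $[H(C\cdot g)]=[C\cdot H(g)]$ are two distinct points of $\mathbb{P}^9$; this also ensures that both $\langle C\cdot f,\ C\cdot g\rangle$ and $\langle f,\ C\cdot g\rangle$ are genuine lines (a degenerate ``pencil'' collapsing to a single point could not contain two distinct cubics). Two lines in projective space sharing two distinct points are equal, so
\[
C\cdot\langle f,g\rangle=\langle C\cdot f,\ C\cdot g\rangle=\langle f,\ C\cdot g\rangle,
\]
as claimed.

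I do not expect a real obstacle here: the statement is a formal consequence of the equivariance of the Hessian map combined with the fact that two distinct points span a unique line. The only point demanding a moment of care is verifying that the relevant pencils are honest lines rather than degenerate loci, which follows at once from $H(g)\neq g$ and the invertibility of $C$.
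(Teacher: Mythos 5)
Your proof is correct and follows essentially the same route as the paper's: both rest on the equivariance $H(C\cdot g)=C\cdot H(g)$ together with the fact that a line in $\mathbb{P}^9$ is determined by two distinct points on it. The paper phrases this as the chain of equalities $C\cdot\langle f,g\rangle=C\cdot\langle g,H(g)\rangle=\langle C\cdot g,H(C\cdot g)\rangle=\langle f,C\cdot g\rangle$, leaving the ``two distinct points'' justifications implicit, whereas you spell them out; the content is identical.
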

\begin{proof}
    It is enough to recall that $H(C\cdot g)=C\cdot H(g)$, and thus we have: 
    \begin{equation*}
        C\cdot \langle f,g\rangle =C\cdot\langle g,H(g)\rangle =\langle C\cdot g, H(C\cdot g)\rangle =\langle f,C\cdot g\rangle
    \end{equation*}
\end{proof}
\begin{proof}[Proof of Proposition \ref{numerofinitoorbite}]
It is enough to prove the statement for representatives of the $SL(3)$–orbits of plane cubics. 
From the classification in Table \ref{tab:pencilN}, the only orbit representatives for 
which the family of pencils contained in $N$ might not be finite are 
\[
x^3\  (\textcolor{blue}{A}),\quad x^2y\ (\textcolor{blue}{B}),\quad xy(x+y)\ (\textcolor{blue}{C}),\quad xyz\ (\textcolor{blue}{D}).
\]
We now analyze these four cases separately. \revised{As the reader may expect, some orbits will 
appear multiple times in the case-by-case analysis.
Whenever this happens, we will indicate where they have already been encountered by 
referring to the appropriate table.}
\begin{itemize}
\item[\textcolor{blue}{\textbf{A}}]We began our analysis with $x^3$. Our goal is to 
show that the pencils of the form 
\begin{equation*}
    \langle x^3,3b_1 x^2y+3b_2 x^2z+3b_3xy^2+6b_4 xyz+3b_5 xz^2\rangle
    \end{equation*}
    form only finitely many $SL(3)$-orbits, \revised{which we can explicitly list}. We observe that all 
    the cubics in the linear combination are of the form $line+conic$, \revised{where the line 
    is $\{x=0\}$}. 
    Let us distinguish several cases: 
 \begin{itemize}
    \item \textbf{Conic(smooth) + Tangent line$\{x=0\}$}\\
    This condition is satisfied when the intersection of $\{x=0\}$ and the conic 
    $\{3b_1 xy+3b_2 xz+3b_3y^2+6b_4 yz+3b_5 z^2\}$ consists of a single point, and the 
    conic does not degenerate into one or two lines. The first requirement leads to the 
    condition $b_4^2=b_3b_5$, while the second one is equivalent to requiring that the 
    Hessian is non-zero and distinct from the cubic itself. \revised{Together} these conditions are 
    precisely those we found for cubics in $H^{-1}(x^3)$ (see Table~\ref{tab:hessianspace}). 
    Moreover, all cubics of the form $conic(smooth)+tangent \ \ line$ lie in the same $SL(3)$-orbit. 
    By Lemma \ref{equivhess}, it follows that all the pencil of the form 
    \begin{equation*}
        \langle x^3, smooth\ \ conic\ \  tangent \ \ to \ \ \{x=0\}\rangle  
        \end{equation*}
    belong to the same orbit. 
    \item \textbf{Conic(smooth) + Secant line}\\
    From the classification of the $SL(3)$-orbits, we know that all cubics of this 
    type lie in the same orbit. We claim that the Hessian of such a cubic lies in 
    the pencil generated by it and by $x^3$. Then, by Lemma \ref{equivhess}, this 
    will imply that all pencils of the form 
    \begin{equation*}
        \langle x^3,smooth\ \ conic\ \ secant\ \ to \ \ \{x=0\}\rangle 
    \end{equation*}
    belong to the same orbit.\\
    Let us consider the cubic $x(x^2+yz)$, which is of this type. Its Hessian is 
    $x(-3x^2+xyz)\in \langle x^3,x(x^2+yz)\rangle $ and has the same form. If now 
    $xq$ denotes another cubic of the same form, we know that there exist a matrix 
    $C\in SL(3)$ such that $C\cdot x(x^2+yz)=xq$. Moreover, $C$ must send $x$ to 
    itself and the following diagram commutes: 
    \[
\begin{tikzcd}
     f=x(x^2+yz) \arrow[r,"\cdot C"] \arrow[d,"H"] & xq \arrow[d,"H"] \\
    H(f)\in\langle x^3,x(x^2+yz)\rangle  \arrow[r,"\cdot C"] &   C\cdot H(f)=H(xq)
\end{tikzcd}
\]
We obtain 
\begin{equation*}
    H(xq)=C\cdot H(f)\in C\cdot \langle x^3,f\rangle =\langle C\cdot x^3, C\cdot 
    f\rangle =\langle x^3,xq\rangle 
\end{equation*}
Which prove our claim. 
\item \textbf{Three non-concurrent lines (triangle)}\\
First, observe that the pencil $\langle x^3,xyz\rangle $ contains the cubic 
$x^3+xyz=x(x^2+yz)$, and therefore it coincides with the pencil 
$\langle x^3,x(x^2+yz)\rangle $, which is precisely the case previously analyzed. \\
If $x(x+\beta_1y+\beta_2z)(x+\gamma_1y+\gamma_2z)$ is another triangle of the same form, 
then  $\beta_1\gamma_2-\beta_2\gamma_1\neq 0$ and there exist $C\in SL(3)$ such 
that $C\cdot xyz = x(x+\beta_1y+\beta_2z)(x+\gamma_1y+\gamma_2z)$ and $C\cdot x=x$. 
Therefore, all pencils of the form 
\begin{equation*}
    \langle x^3, triangle(three\ \  distinct\ \  lines)\rangle 
\end{equation*}
lie in the same orbit. However, this orbit coincides with that of the pencils 
\begin{equation*}
     \langle x^3,smooth\ \ conic\ \ secant\ \ to \ \ \{x=0\}\rangle 
\end{equation*}
\item \textbf{Three concurrent lines (cone)}\\
All the cones consisting of three distinct lines lie in the same orbit. 
Moreover, if two such conics share the line $\{x=0\}$, we can map one to the other 
using an element $C\in SL(3)$ that fixes $x$. Therefore, applying Lemma \ref{equivhess}, 
we conclude that all pencils
\begin{equation*}
    \langle x^3,cone(three\ \  distinct\ \ line)\rangle 
\end{equation*}
belong to the same orbit. 
\item \textbf{line  $\{x=0\}$+ double line}\\
First, observe that the pencil $\langle x^3,xy^2\rangle $ contains the cubic 
$x^3+xy^2=x(x^2+y^2)$ that is a cone with three distinct lines. Thus, this 
pencil coincides with $\langle x^3,x(x^2+y^2)\rangle $ and it is in the previous orbit.\\
All cubics of the type $xr^2$ lie in the same orbit. In particular, since we 
must have $C\cdot xr^2=xs^2$ with $r,s \neq x$, it follows that $C$ must fix the line $\{x=0\}$. 
Therefore, by applying again Lemma \ref{equivhess}, we conclude that all pencils of the form 
\begin{equation*}
    \langle x^3,\{x=0\}+ double \ \ line\rangle 
\end{equation*}
lie in the same orbit. Moreover, based on the initial observation, this orbit coincides with 
\begin{equation*}
    \langle x^3,cone(three\ \ distinct\ \ lines)\rangle 
\end{equation*}
\item \textbf{double line  $\{x=0\}$+ line}\\
All this kind of conics lie in the same $SL(3)$-orbit. Moreover, if $C\cdot x^2r=x^2s$ 
then $C$ must fix $x$, and thus we have
\begin{equation*}
    \langle x^3, \{x=0\}^2+line\rangle 
\end{equation*}
lie in the same orbit.

\end{itemize}

In summary, we have shown that all pencils containing $x^3$ and lying in $N$ fall into 
four distinct orbits. We list in the following table a representative for each orbit, 
together with the dimension of the orbit and the rank of the Jacobian matrix $210\times 45$ of $N$. 
\renewcommand{\arraystretch}{1.72}
\begin{table}[H]
\begin{adjustwidth}{-0.8cm}{-0.8cm}

\centering
    \begin{tabular}{c|c|c|c}
    \textbf{Description}&\textbf{Representative} $\langle f,g\rangle $ & \textbf{dim($O(\langle f,g\rangle )$)}&\textbf{Rank $J$}\\
    \hline
       $\langle x^3,Three\ non{-}concurrent\ lines\rangle =$&$\langle x^3,xyz\rangle $&$6$&$36$\\
       $\langle x^3,Conic+ secant\ \ line\rangle$&&&\\ \hline
       $\langle x^3,Conic+tangent\ \  line\rangle $&$\langle x^3,x(y^2+xz)\rangle $&$5$& $36$\\ \hline
       $\langle x^3,x\cdot double\ \ line\rangle =$& $\langle x^3,xy^2\rangle $&$4$& $36 $\\ 
       $\langle x^3,Three\ concurrent\ lines\rangle $& & & \\ \hline
       $\langle x^3,x^2\cdot line\rangle $&$\langle x^3,x^2y\rangle $&$3$& $35$\\ \hline
    \end{tabular}
    \caption{Pencils of cubics contained in $N$ that are of the form 
    $\langle x^3,f\rangle $.}
    \label{tabellax^3}
    \end{adjustwidth}
\end{table}
Note that $\langle x^3,xyz\rangle $ and $\langle x^3,x(y^2+xz)\rangle $
are two orbits of the form $\langle f,H(f)\rangle $, and they were already listed in Table \ref{tab:HF-list}
\item[\textcolor{blue}{\textbf{B}}]Let us now consider the case of $x^2y$. The pencils we 
need to analyze are of the form 
\begin{equation*}
    \langle x^2y, b_0x^3+3b_2x^2z+b_6y^3\rangle 
\end{equation*}
However, the space of cubics defined by such the cubics $b_0x^3++3b_2x^2z+b_6y^3$ with 
$b_2,b_6\neq 0$ coincides with $H^{-1}(x^2y)$. All these pencils are of the form 
$\langle f,H(f)\rangle $, and therefore, by the Remark \ref{pencil<f,H(f)>}, they lie 
in the same orbit as $\langle y^2z-x^3,xy^2\rangle $, which was already listed in 
Table~\ref{tab:HF-list}. For the remaining cases, with $b_2=0$ or $b_6=0$, it is 
straightforward to verify that there exists an element of $SL(3)$ preserving $x^2y$ which 
maps $x^3+3b_2x^2z$ to $x^2z$ and $x^3+b_6y^3$ to $x^3+y^3$. We find five orbits:  
\renewcommand{\arraystretch}{1.72}
\begin{table}[H]

\centering
    \begin{tabular}{c|c|c}
    \textbf{Representative} $\langle f,g\rangle $ & \textbf{dim($O(\langle f,g\rangle )$)}&\textbf{Rank $J$}\\ \hline
    $\langle x^2y,y^3+x^2z\rangle $&$ 6$&$36$\\ \hline
    $\langle x^2y,x^3+y^3\rangle  $ &$5$ &$36$ \\ \hline 
    $\langle x^2y,x^2z\rangle ,\ \langle x^2y,x^3+x^2z\rangle $&$4$& $35$\\ \hline
    
    $\langle x^2y,y^3\rangle$&$4$& $36$\\ \hline 
    $\langle x^2y,x^3\rangle $&$3$& $35$\\ \hline 
       \end{tabular} 
       \caption{Pencils of cubics contained in $N$ that are of the form 
       $\langle x^2y,f\rangle $.}
    \label{tabellax^2y}

\end{table}
It can be observed that the last two orbits 
in the table coincide with two that had already been listed in Table~\ref{tabellax^3}. 
Thus, the only new orbits with respect to the previously classified ones are those of 
$\langle x^2y,x^3+y^3\rangle $ and $\langle x^2y,x^2z\rangle $.

\item[\textcolor{blue}{\textbf{C}}]Let us now consider the case of $xy(x+y)$. 
The pencils we need to analyze are of the form: 
\begin{equation*}
    \langle xy(x+y),b_0x^3+3b_2(x^2z+xyz+y^2z)+b_6y^3\rangle 
\end{equation*}
We denote by $g$ the linear combination $b_0x^3+3b_2(x^2z+xyz+y^2z)+b_6y^3$. 
A straightforward computation of the Hessian shows that $H(g)\in\langle xy(x+y),g\rangle $ 
for every $g$ whose Hessian is defined. Therefore, for such $g$, the orbit coincides with 
one of the form $\langle f,H(f)\rangle $ already listed in Table~\ref{tab:HF-list}, 
namely the one corresponding to $\langle y^2z-x^3-x^2z,3xy^2-x^2z+y^2z\rangle $. 
The remaining cases are pencil of the form $\langle xy(x+y),x^3+b_6y^3\rangle $. 
For these we observe that the pencil contains a $triple\ \  line$ only when $b_6=0$ or $b_6=1$; 
in all other cases, it instead contains a $double\   line + line$. We can thus reduce 
the first case to the orbit of $\langle x^3,xy^2\rangle$ and the second one to the orbit 
of $\langle x^2y,x^3+y^3\rangle $, both of which have already been analyzed. 
We find three orbits of pencils. 
\renewcommand{\arraystretch}{1.72}
\begin{table}[H]
\begin{adjustwidth}{-1.5cm}{-1.5cm}

\centering
    \begin{tabular}{c|c|c}
    \textbf{Representative} $\langle f,g\rangle $ & \textbf{dim($O(\langle f,g\rangle )$)}&\textbf{Rank $J$}\\
    \hline
    $\langle xy(x+y),z(x^2+xy+y^2)\rangle ,\ \langle xy(x+y),x^3+z(x^2+xy+y^2)\rangle , $ &$7$ &$36$ \\ 
    $\langle xy(x+y),x^3+y^3+z(x^2+xy+y^2)\rangle $& & \\ \hline
    $\langle xy(x+y),x^3+y^3\rangle ,\ \langle xy(x+y),x^3\rangle $&$4$& $36$\\ \hline
       $\langle xy(x+y),x^3+b_6y^3\rangle\quad b_6\neq 1,0 $&$5$&$36$\\ \hline
       \end{tabular}
    \caption{Pencils of cubics contained in $N$ that are of the form $\langle xy(x+y),f\rangle $.}
    \label{tabellaxy(x+y)}
    \end{adjustwidth}
\end{table}
\textcolor{black}{These three orbits coincide with three that were already found previously in table~\ref{tab:HF-list},
\ref{tabellax^3} and \ref{tabellax^2y} respectively.}

\item [\textcolor{blue}{\textbf{D}}]The last case we need to study is that of $xyz$. 
We consider pencils of the form 
\begin{equation*}
    \langle xyz,b_0x^3+b_6y^3+b_9z^3\rangle 
\end{equation*}
and observe that we can act with $SL(3)$ while fixing $xyz$. Therefore, there are three orbits. 
\renewcommand{\arraystretch}{1.5}
\begin{table}[H]

\centering
    \begin{tabular}{c|c|c}
    \textbf{Representative} $\langle f,g\rangle $ & \textbf{dim($O(\langle f,g\rangle )$)}&\textbf{Rank $J$}\\
    \hline
    $\langle xyz,x^3+y^3+z^3\rangle $&$8$& $36$\\ \hline $\langle xyz,x^3+y^3\rangle $&$6$& $36 $\\ \hline 
    $\langle xyz,x^3\rangle $ &$6$ &$36$ \\ \hline
       \end{tabular}
    \caption{Pencils of cubics contained in $N$ that are of the form $\langle xyz,f\rangle $.}
    \label{tabellaxyz}
\end{table}
\end{itemize}
\textcolor{black}{These three orbits are all of the form $\langle f,H(f)\rangle $, and therefore coincide 
    with three of those in Table~\ref{tab:orbitsN}. In particular, the second one coincides 
    with the orbit of $\langle y^2z-x^3,xy^2\rangle $.}\\

To summarize what we have done so far: the analysis carried out up to this point has 
shown that $N$ contains, in addition to the orbit of the pencil $\langle x^3+y^3+z^3,xyz\rangle$, 
which is, in particular, the only one of dimension $8$, eight more orbits. Among these, 
four are of the form $\langle f,H(f)\rangle $, while the remaining four have dimensions 
$5,4,4,$ and $3$. 
\renewcommand{\arraystretch}{1.8}
\begin{table}[H]
\begin{adjustwidth}{-2.1cm}{-2.1cm}

\centering
    \begin{tabular}{c|c}
    \textbf{Representative} $\langle f,g\rangle $ & \textbf{dim($O(\langle f,g\rangle) $)} \\
    \hline
    $\langle x^3+y^3+z^3,xyz\rangle$& $8$\\ \hline
       $\langle y^2z-x^3-x^2z,3xy^2-x^2z+y^2z\rangle $  & $7$ \\ \hline
       $\langle y^2z-x^3,xy^2\rangle $ &$6$\\ \hline
       $\langle x^3,xyz\rangle $&$6$\\ \hline
       $\langle x(y^2+xz),x^3\rangle $&$5$\\ \hline 
       $\langle x^2y,x^3+y^3\rangle $& $5$\\ \hline 
       $\langle x^3,xy^2\rangle $& $4$\\ \hline
       $\langle x^2y,x^2z\rangle $& $4$\\ \hline
       $\langle x^3,x^2y\rangle $& $3$\\ \hline
       
    \end{tabular}
    \caption{Classification of orbits in $N$}
    \label{tab:orbitsN}
    \end{adjustwidth}
\end{table} 
\revised{The computation of the rank of the Jacobian matrix
shows that the rank equals the expected codimension 
in all cases except for 
$O(\langle x^2y,x^2z\rangle)$ and $O(\langle x^3,x^2y\rangle)$, 
where it drops. 
Hence these two orbits constitute the singular locus of the variety.}
\end{proof}
 
\revised{To conclude this section, we show that $N$ is reduced. 
To this end, we study the scheme locally in neighborhoods of the singular points 
in order to detect possible embedded components 
(see Proposition~4.9 in \cite{atiyahmacdonald}). 
This allows us to work in local charts of the Grassmannian, and hence in an 
affine space with $16$ variables. We then use computational methods on these 
local schemes.\\
To obtain a local chart in a neighborhood of $\langle x^3, x^2y \rangle$, we 
consider a $2\times 10$ matrix of the form
\[
\begin{bmatrix}
1 & 0 & a_2 & \dots & a_9 \\
0 & 1 & b_2 & \dots & b_9 
\end{bmatrix}.
\]
The Plücker coordinates are given by the $2\times2$ minors of this matrix. 
Substituting them into the equations defining $N$, we obtain an ideal in the 
$16$ local coordinates $a_2,\dots,a_9,b_2,\dots,b_9$. Using \texttt{Macaulay2}, 
we verify that this ideal is radical, and hence the corresponding scheme is reduced.\\
Repeating the same argument in a neighborhood of $\langle x^2y, x^2z \rangle$, 
and thus considering the matrix
\[
\begin{bmatrix}
a_0 & 1 & 0 & a_3 & \dots & a_9 \\
b_0 & 0 & 1 & b_3 & \dots & b_9 
\end{bmatrix},
\]
we again obtain a reduced scheme.}

\subsection{Proof of Main Result}
All the arguments developed so far lead to the proof of the main result of this article, 
namely
\mainthm*

\begin{proof}
\revised{Since $SL(3)$ is irreducible and the image of an irreducible variety under 
a morphism is irreducible, the orbit is irreducible, and hence so is its closure (\cite{Linearalggroups}).}\\
For the remainder, it sufficies to prove that the two varieties $N$ (Definition~\ref{defN}) 
and $H_8$ (Definition \ref{defXX})
coincide. \\
Both $N$ and $H_8$ are $8$-dimensional varieties contained in $G(1,9)$, with $H_8 \subset N$. 
From Proposition \ref{multigradoScubiche} and Table \ref{tab:multigradoN}, it follows 
that these two varieties have the same multidegree with respect to the 8-dimensional 
Schubert cycles in $G(1,9)$, namely $(1,3,9,12,6)$. By the Basis Theorem 
(see \cite{Schubertcalculus}), it follows that these varieties coincide in dimension $8$. 

To complete the proof of the theorem, it will therefore be enough to show that 
all orbits of pencils of cubics contained in $N$ also lie in $H_8$, that is, 
in the closure of $O(\langle x^3+y^3+z^3,xyz\rangle)$. To do so, we explicitly 
construct degenerations which, as $\epsilon \rightarrow 0$, tend to pencils 
lying in the smaller orbits.

The degenerations of the orbits of the form $\langle f, H(f)\rangle$ 
are obtained by considering families of smooth cubics degenerating to $f$, 
together with their corresponding Hessians. 
The same argument applies to the orbits 
$\langle x^2y, x^3+y^3\rangle$ and $\langle x^3, xy^2\rangle$.
In the following table, the left-hand column contains a representative 
for each orbit, while the right-hand column displays, for each such orbit, 
a family of pencils whose limit, as $\varepsilon \to 0$, is the corresponding pencil on the left.
For each family appearing in the right-hand column, one checks 
(either by hand or using Macaulay2) that it is of the form 
$\langle f(\varepsilon), H(f(\varepsilon))\rangle$, 
with $f(\varepsilon)$ smooth for $\varepsilon \neq 0$.
\renewcommand{\arraystretch}{2}
\begin{table}[H]
\begin{adjustwidth}{-2.1cm}{-2.1cm}
\centering
    \begin{tabular}{c|c}
    \textbf{Representative} $\langle f,g\rangle $ & \textbf{Degeneration} \\
    \hline
       $\langle y^2z-x^3-x^2z,3xy^2-x^2z+y^2z\rangle $  & $\langle y^2z-x^3-x^2z+\epsilon z^3,\ 3xy^2-x^2z+y^2z+ \epsilon(-9xz^2-3z^3)\rangle $ \\ \hline
       $\langle y^2z-x^3,xy^2\rangle $ &$\langle y^2z-x^3-\epsilon z^3,\ xy^2+3\epsilon xz^2\rangle $\\ \hline
       $\langle x^3,xyz\rangle $&$\langle x(x^2+yz)+\epsilon (y^3+z^3),\ -6x^3+2xyz+\epsilon(216xyz\epsilon-6y^3-6z^3)\rangle $\\ \hline
       $\langle x(y^2+xz),x^3\rangle $&$\langle x(y^2+xz)+\epsilon z^3,\ x^3-\epsilon(-3y^2z+3xz^2)\rangle $\\ \hline 
       $\langle x^2y,x^3+y^3\rangle $& $\langle x^3+2y^3+(x+\epsilon z)^3, xy(x+\epsilon z)\rangle $\\ \hline 
       $\langle x^3,xy^2\rangle $& $\langle x^3+y^3+(\epsilon z-y)^3,xy(\epsilon z-y)\rangle $\\ \hline
    \end{tabular}
    \caption{Degeneration families for all the orbits in $N$
    of the form $\langle f,H(f)\rangle$ and for $O(\langle x^2y,x^3+y^3\rangle )$, 
    $O(\langle x^3,xy^2\rangle)$}
    \label{tab:degenerations}
\end{adjustwidth}
\end{table}

The degenerations of $\langle x^2y, x^2z\rangle$ and 
$\langle x^3, x^2y\rangle$ are obtained by a slightly different argument.
For $\langle x^2y, x^2z\rangle$, we consider a family of pencils 
in $\langle xy(x+y), z(x^2+xy+y^2)\rangle$, depending on $\varepsilon$, 
which degenerates to $\langle x^2y, x^2z\rangle$ as $\varepsilon \to 0$.
One observes that 
$\langle xy(x+y), z(x^2+xy+y^2)\rangle$ lies in the same orbit as 
$\langle y^2z - x^3 - x^2z, 3xy^2 - x^2z + y^2z\rangle$; 
this follows from the analysis in point \textcolor{blue}{C}. 
In particular, we consider
\[
\langle xy(x+\varepsilon y), 
z(x^2+\varepsilon xy+\varepsilon^2 y^2)\rangle  
\in O(\langle xy(x+y), z(x^2+xy+y^2)\rangle)
= O(\langle y^2z-x^3-x^2z, 3xy^2-x^2z+y^2z\rangle).
\]
This shows that $\langle x^2y, x^2z\rangle$ belongs to the closure of the Hesse pencil $H_8$, 
since we have already observed that the orbit 
$O(\langle y^2z-x^3-x^2z, 3xy^2-x^2z+y^2z\rangle)$ 
is contained in $H_8$, and therefore any pencil in its closure is contained in $H_8$ as well.
Finally, the degeneration of $\langle x^3, x^2y\rangle$ is obtained from 
$\langle x^3, xy(x+\varepsilon z)\rangle \in O(\langle x^3, xyz\rangle)$.
This shows that $\langle x^3, x^2y\rangle$ lies in the closure of 
$O(\langle x^3, xyz\rangle)$, which we have already proved is contained in $H_8$. 
By transitivity of inclusion, it follows that 
$\langle x^3, x^2y\rangle$ is also contained in $H_8$.

For completeness, we report in the following table the degeneration families
of the last two orbits considered.
\renewcommand{\arraystretch}{2}
\begin{table}[H]
\begin{adjustwidth}{-2.1cm}{-2.1cm}
\centering
    \begin{tabular}{c|c}
    \textbf{Representative} $\langle f,g\rangle $ & \textbf{Degeneration} \\
    \hline
       $\langle x^2y,x^2z\rangle $& $\langle xy(x+\epsilon y), z(x^2+\epsilon xy+\epsilon^2 y^2)\rangle $\\ \hline
       $\langle x^3,x^2y\rangle $& $\langle x^3,xy(x+\epsilon z)\rangle $\\ \hline
    \end{tabular}
    \caption{Degeneration families for $O(\langle x^2y,x^2z\rangle)$ and $O(\langle x^3,x^2y\rangle)$}
    \label{tab:deg-other}
\end{adjustwidth}
\end{table}

Thus, we have proved that all the orbits contained in $N$ lie in the closure of 
$O(\langle x^3+y^3+z^3, xyz\rangle)$, and therefore we conclude that $H_8 = N$.\\
\revised{The statement about the singular locus follows from 
Proposition~\ref{numerofinitoorbite}, where we show that the singular locus 
of $N$ is given by the two claimed orbits, together with the fact that $N$ is 
reduced, as shown at the end of Section~\ref{sectionR}.}
% \revised{Moreover, in the proof of Proposition~\ref{numerofinitoorbite} we computed the rank 
% of the Jacobian matrix at a representative of each orbit in $N$. 
% That computation shows that the rank equals the expected codimension 
% in all cases except for 
% $O(\langle x^2y,x^2z\rangle)$ and $O(\langle x^3,x^2y\rangle)$, 
% where it drops. 
% Hence these two orbits constitute the singular locus of the variety.}
This completes the proof of Theorem~\ref{teoS=N}.
\end{proof}

\begin{oss}
\revised{The canonical class $K_{H_8}$ remains unknown. 
Since $H_8$ is not a complete intersection, the adjunction formula 
cannot be applied, unlike in the case of binary quartics 
(see Remark~\ref{canonicalbundlequartiche}). 
A computational approach was also attempted, but the complexity of the 
equations involved did not allow us to obtain a result.}
\end{oss}

\section*{Acknowledgements}
The author would like to thank Giorgio Ottaviani at the University of Florence for 
suggesting the problem and for his guidance during the development of this work.

\section*{Statements and Declarations}
\textbf{Competing Interests.} The author declares that she has no competing interests.

\newpage

\end{document}